\newcommand{\banachb}{Y}
\def\K{\mathbf{K}}
\DeclareMathOperator{\degree}{deg}
\newcommand{\Sb}{\mathcal{S}}
\newcommand{\Sym}[1]{\mathrm{Sy}}
\newcommand{\Com}{\mathbb{C}}
\newcommand{\Vset}{v}
\newcommand{\G}{\mathrm{pv}}
\newcommand{\Lo}{\mathrm{pq}}
\newcommand{\powb}[2]{{\left({#1}\right)}^{#2}}	
\newcommand{\Vx}{V^x}
\newcommand{\Vy}{V^y}
\newcommand{\Vc}{V^c}
\newcommandx{\reply}[2][1=]{\todo[linecolor=red,backgroundcolor=red!25,bordercolor=red,#1]{#2}}
\newcommand{\PutDelim}[4]{%
  \ifcase#1 #3#2#4 \or \bigl#3#2\bigr#4 \or \Bigl#3#2\Bigr#4%
    \or \biggl#3#2\biggr#4 \or \Biggl#3#2\Biggr#4%
  \else \left#3#2\right#4\fi}
\providecommand{\br}[1]{\left\{ #1 \right\}}
\newcommand{\R}{{\mathbb R}}
\newcommand{\N}{{\mathbb N}}
\newcommand{\neww}[1]{}
\newcommand{\coloneq}{\mathrel{\mathop:}=}
\newtheorem{assumption}{Assumption}{\bf}{} %\it
\newtheorem{proposition}{Proposition}{\bf}{} %\it
\newtheorem{definition}{Definition}{\bf}{}
\newtheorem{lemma}{Lemma}{\bf}{}
\newtheorem{theorem}{Theorem}{\bf}{}
\newtheorem{corollary}{Corollary}{\bf}{}
{\bf}{}
{\bf}{}
\begin{document}
\title{Hybrid Methods in Solving Alternating-Current Optimal Power Flows}

\author{Alan C. Liddell, Jie Liu, Jakub Mare\v{c}ek, Martin Tak\'a\v{c}% <-this % stops a space
\thanks{
A. C. Liddell is with the University of Notre Dame, South Bend, IN, USA. 
J. Liu and M. Tak\'a\v{c} are with Lehigh University, Bethlehem, PA, USA. 
J. Mare\v{c}ek is with IBM Research, Dublin, Ireland;
 e-mail: \url{jakub.marecek@ie.ibm.com}.}% <-this % stops a space
}

\maketitle

\begin{abstract}
Many steady-state problems in power systems, including rectangular power-voltage formulations of optimal power flows in the alternating-current model (ACOPF),
can be cast as polynomial optimisation problems (POP).
For a POP, one can derive strong convex relaxations, or rather hierarchies of ever stronger, but ever larger relaxations. 
We study means of switching from solving the convex relaxation 
to Newton method working on a non-convex augmented Lagrangian of the POP.
%In some cases, readily available second-order methods for solving the convex relaxations 
%fail to perform even a single iteration within reasonable run-times.
%First-order methods, which have much lower per-iteration computational and memory requirements, 
%can be applied, but require many more iterations than second-order methods to converge within the same accuracy.
\end{abstract}

\section{Introduction}

The alternating-current optimal power flow problem (ACOPF) is one of the best known
 non-convex non-linear optimisation problems, studied extensively since 1960s \cite{Low2014a,Low2014b,Panciatici,Ghaddar2015}.
Early work focused on applications of Newton method to the non-convex problem,
  which produced exceptionally fast routines, 
  albeit without any guarantees as to their global convergence. 
More recently, Lavaei and Low \cite{lavaei2012zero} have shown that a semidefinite 
  programming (SDP) relaxation produces global optima in some cases.
Ghaddar et al. \cite{Ghaddar2015} have shown that the SDP relaxation can be strengthened
  iteratively such that the hierarchy of relaxations converges to the global optimum of the non-convex problem, 
  asymptotically, under mild conditions, 
  albeit at a considerable computational cost.
It has not been clear, however, how to combine the two approaches. 
%See \cite{Low2014a,Low2014b,Panciatici} for further references.  

On one hand, Newton method's local convergence can lead to a variety of poor outcomes.
When one starts from an initial point outside of a neighbourhood of a stationary point, Newton method may diverge
and produce no feasible solution. %Kantorovich
Even within the neighbourhood, where Newton method converges, the stationary point may turn out to be very far from
the global optimum. 
For an illustration, see Figure \ref{fig:motivation},  which we discussed in more detail in Section \ref{sec:practice}.
This behaviour is inherent in the non-convexity of the problem.

On the other hand, solving the strengthened SDP relaxations \cite{Ghaddar2015} is challenging, computationally.
Leading second-order methods for solving the SDP relaxations, such as SeDuMi \cite{sturm1999using}, %SDPA \cite{yamashita2003implementation}, and SDPT3 \cite{toh1999sdpt3}, 
often converge within dozens of iterations on SDP relaxations of even the largest available instances available, 
but the absolute run-time and memory requirements of a single iteration may 
be prohibitively large.
Alternatively, one may employ first order methods \cite{MarecekTakac2015,7403152} 
whose memory requirements and per-iteration run-times are trivial, 
but whose rates of convergence are sub-linear.
Either way, as one progresses in the hierarchy, the run-time to reach acceptable accuracy grows fast. % becomes challenging.

To address the challenge, we introduce novel means of combining solvers working on the
 convexification and solvers working on the non-convex problem.
We employ a first-order method in solving the convexification \cite{MarecekTakac2015}, until we can guarantee local convergence of the Newton method on the non-convex Lagrangian of the problem, 
possibly considering some regularisation \cite{MarecekTakac2015}. 
In particular, the guarantee considers points $z_0$ and $z^*$, such that when we start a Newton method or a similar algorithm 
 at the point $z_0$, %
it will generate a sequence of points $z_i$ converging to $z^*$ with quadratic rate of convergence, i.e.
%\todo[inline]{maybe say local optimum, stationary point???}
%\reply[inline]{Jie: Yes, Newton actually only guarantees fast local convergence; however we would like to consider a backtrack strategy for global optimum in a big framework although in experiments we did not do that. I guess we may illustrate this if there is enough space ...}
\begin{align}
\|z_i - z^* \| \le (1/2)^{2^i - i} \|z_0 - z^*\|.
\end{align}
The guarantee requires only the knowledge of the particular Lagrangian and its partial derivatives at $z_0$, but does not require the computation of  $z_i, i > 0$ or solving of any additional optimisation problems.
This could be seen as means of on-the-fly choice of the solver, which preserves the global convergence guarantees associated 
with convexification, whilst improving upon the convergence rate of first-order methods.

\section{A Brief Overview of $\alpha$-$\beta$ Theory}

Our approach is based on the $\alpha$-$\beta$ theory 
of Smale \cite{ShubSmale1993,chen1994approximate,Cucker1999},
which is also known as 
the point estimation theory.
We present the basics of the theory and a simple illustration, prior to presenting the main result.

Consider the case of a general real-valued polynomial system $f:\R^m \mapsto \R^n$, i.e.,
 a system of polynomial equations $f \coloneq ( f_1, \, \dots, \, f_n)$ in variables $x \coloneq (x_1, \dots, x_m) \in \R^m$.
Let us define the Newton operator at $x \in \R^m$ as 
$$N_f(x) \coloneq x - [\nabla f(x)]^\dagger f(x),$$
where $[\nabla f(x)]^\dagger \in \R^{m \times n}$ is the Moore–Penrose inverse of the Jacobian matrix of $f$ at $x$.
A sequence with initial point $x_0$ and iterates of the Newton method subsequently, $x_{i + 1} \coloneq N_f(x_i)$ for $i \ge 0$,
is well-defined if $[\nabla f(x_i)]^\dagger$ is well defined at all $x_i, i \ge 0$.
We say that $x \in \R^m$ is an \emph{approximate zero} of $f$ if and only if
\begin{enumerate}
    \item the sequence $\{x_i\}$ is well-defined; and
    \item there exists $x' \in \R^m$ such that $f(x') = 0$ and $\| x_i - x' \| \leq (1/2)^{2^i - 1} \|x_0 - x'\|$ for all $i \geq 0$.
\end{enumerate}
We call $x' \in \R^m$ the \emph{associated zero} of $x \in \R^m$ and 
say that $x$ \emph{represents} $x'$.
The key result of $\alpha$-$\beta$ theory is:

\begin{proposition}[\cite{ShubSmale1993,Cucker1999}]
Let $f:\R^m \mapsto \R^n$ be a system of polynomial equations and define functions $\alpha(f,x), \beta(f,x), \gamma(f,x)$ as:
\begin{subequations}
\begin{align}
    \alpha(f,x) &\coloneq \beta(f,x) \gamma(f,x), \label{alpha} \\
    \beta(f,x) &\coloneq \left\| [\nabla f(x)]^\dagger  f(x)  \right\| = \| x - N_f(x) \|, \label{beta} \\
    \gamma(f,x) &\coloneq \sup_{k > 1} \left\| {\frac{{ [\nabla f(x)]^\dagger [\nabla^{(k)}f] (x)}}{{k!}}} \right\|^{1/(k-1)}, \label{gamma}
\end{align}
\end{subequations}
where $[\nabla f(x)]^\dagger \in \R^{m \times n}$ is the Moore–Penrose inverse of the Jacobian matrix of $f$ at $x$ and $[\nabla^{(k)}f] $ is the symmetric tensor whose entries are the $k$-th partial derivatives of $f$ at $x$.
Then there is a universal constant $\alpha_0 \in \R$ such that if $\alpha(f, x) \leq \alpha_0$, then $x$ is an approximate zero of $f$.
Moreover, if $x'$ denotes its associated zero, then $\| x - x'\| \le 2 \beta(f,x)$.
It can be shown that $\alpha_0 = \frac{13 - 3 \sqrt{17}}{4} \approx 0.157671$ satisfies this property.
\label{alphabeta}
\end{proposition}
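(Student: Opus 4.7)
The plan is to follow the classical Smale-style argument, in which the quantity $\gamma(f,x)$ plays the role of controlling the behavior of $f$ and its derivatives on a ball around $x$. The starting point is a set of purely analytic estimates derived from the defining bound $\|[\nabla f(x)]^\dagger [\nabla^{(k)}f](x)/k!\| \leq \gamma(f,x)^{k-1}$: Taylor-expanding $\nabla f$ about $x$ and summing the geometric series gives
\begin{align*}
\|I - [\nabla f(x)]^\dagger \nabla f(y)\| \leq 1 - \frac{1}{(1-u)^2}
\end{align*}
whenever $u := \|y - x\| \cdot \gamma(f, x)$ is small, and a Neumann-series inversion then yields $\|[\nabla f(y)]^\dagger \nabla f(x)\| \leq 1/\psi(u)$ for $u < 1 - \sqrt{2}/2$, where $\psi(u) := 2u^2 - 4u + 1$. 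First I would verify these estimates, the only subtlety being the use of the Moore--Penrose pseudoinverse, which requires $\nabla f$ to have locally constant rank and forces one to work on the image of $[\nabla f(x)]^\dagger$ throughout.

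Second, I would specialise to $y = N_f(x)$, so that $\|y - x\| = \beta(f, x)$ and hence $u = \alpha(f, x)$. Combining the estimates above with an integral form of the mean-value theorem for $f(y) - f(x)$ produces contraction bounds of the shape
\begin{align*}
\beta(f, N_f(x)) &\leq \frac{u}{(1-u)\,\psi(u)} \cdot \beta(f, x), \\
\gamma(f, N_f(x)) &\leq \frac{1}{(1-u)\,\psi(u)} \cdot \gamma(f, x),
\end{align*}
whose product yields a scalar recursion $\alpha(f, N_f(x)) \leq \Phi(\alpha(f, x))$ for an explicit rational function $\Phi$. The constant $\alpha_0 = (13 - 3\sqrt{17})/4$ is then defined precisely as the largest value on which $\Phi(u) \leq u/2$; this is the condition needed for induction to produce $\alpha(f, x_i) \leq 2^{-(2^i - 1)} \alpha(f, x_0)$ and hence $\beta(f, x_i) \leq 2^{-(2^i - 1)} \beta(f, x_0)$.

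Third, summing $\|x_{i+1} - x_i\| = \beta(f, x_i)$ as a geometric series shows that $\{x_i\}$ is Cauchy with limit $x'$ satisfying $\|x_0 - x'\| \leq 2\beta(f, x_0)$, and continuity of $f$ together with $\beta(f, x_i) \to 0$ forces $f(x') = 0$; applying the same estimate with each $x_i$ in place of $x_0$ yields the full quadratic convergence bound in the definition of approximate zero. The main obstacle is obtaining the sharp constant: the Taylor and Neumann-series steps are essentially routine, but extracting the specific root $(13 - 3\sqrt{17})/4$ from the inequality $\Phi(u) \leq u/2$ requires a careful algebraic analysis of the resulting quartic, and any looseness in the intermediate bounds would produce a strictly suboptimal threshold. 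A secondary technical point is justifying the use of the Moore--Penrose pseudoinverse uniformly along the Newton trajectory, which in practice amounts to checking that the rank of $\nabla f$ is preserved inside the ball of radius $2\beta(f, x_0)$ around $x_0$ where the iterates are confined.
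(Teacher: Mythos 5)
The paper does not actually prove this proposition: it is quoted from Shub--Smale and Cucker--Smale, and the text explicitly defers to those references. Your sketch reconstructs precisely the argument found there -- Taylor expansion of $\nabla f$ controlled by $\gamma$, a Neumann-series bound on $[\nabla f(y)]^{\dagger}\nabla f(x)$ via $\psi(u)=2u^2-4u+1$, one-step contraction estimates for $\beta$ and $\gamma$, the induced scalar recursion for $\alpha$, and the Cauchy/geometric-series argument giving the limit $x'$ with $\|x-x'\|\le 2\beta(f,x)$ -- so in approach you are aligned with the proof the paper is pointing at. Two details deserve correction. First, your displayed bound $\|I-[\nabla f(x)]^{\dagger}\nabla f(y)\|\le 1-\tfrac{1}{(1-u)^2}$ has the right-hand side negative for $u\in(0,1)$; the standard estimate is $\tfrac{1}{(1-u)^2}-1$, and the companion bound is $\|[\nabla f(y)]^{\dagger}\nabla f(x)\|\le (1-u)^2/\psi(u)$ rather than $1/\psi(u)$. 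These are not merely cosmetic if you want the stated constant: Smale's original argument with looser intermediate bounds yields a smaller threshold (about $0.1307$), and the value $\alpha_0=(13-3\sqrt{17})/4$ is the sharp constant obtained only with the tight versions of these estimates (due to Wang Xinghua and collaborators), so you are right that "any looseness" is fatal to that particular number.

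The one genuine gap is the rectangular case $m\neq n$, which you flag but do not resolve. For $n>m$ (overdetermined systems) the Gauss--Newton iteration $x-[\nabla f(x)]^{\dagger}f(x)$ converges, under the analogous $\alpha$-condition, to a stationary point of the least-squares residual, and the limit $x'$ need \emph{not} satisfy $f(x')=0$; the clean statement "$x$ is an approximate zero of $f$" holds as written only for square systems, or for underdetermined ones with surjective Jacobian (Shub--Smale, Dedieu--Shub). Your step "continuity of $f$ together with $\beta(f,x_i)\to 0$ forces $f(x')=0$" is exactly where this breaks: $\beta\to 0$ only gives $[\nabla f(x')]^{\dagger}f(x')=0$. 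Since the paper applies the proposition to first-order optimality systems that are square, this does not damage the paper's use of it, but a complete proof of the proposition as literally stated must either restrict to $m=n$ (as Proposition~\ref{mubound} later does) or invoke the overdetermined variant of the theory.
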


We refer to \cite{ShubSmale1993,Cucker1999} for the proof and a variety of extensions.
Considering that \cite{ShubSmale1993} is somewhat difficult to read and a part of 
a five-paper series, we refer to the survey of Cucker and Smale \cite{Cucker1999} 
or the very recent survey of Beltran and Pardo \cite{BeltranPardo2009} for an
overview.

Let us illustrate the approach on alternating-current power flows (ACPF),
where the instance is defined by:
\begin{itemize}
\item the graph $G = (V, E)$,
where $V, |V| = n$ is partitioned into $\G$, $\Lo$, and $\{ \Sb \}$ slack buses, and 
adjacent buses $(i, j) \in E$ are denoted $i \sim j$, and 
\item
the admittance matrix  $Y \in \Com^{n\times n}$, with $G :=\textrm{Re}(Y)$, $B :=\textrm{Im}(Y)$
\item 
active and reactive injection $P_i$ and $Q_i$ at the bus $i \in \Lo \cup \{ \Sb \}$. 
\end{itemize}
Following \cite{Dj15}, we define the power-flow operator $F:\R^{2n} \mapsto \R^{2n}$ in terms of complex voltages $V_i=\Vx_i+\i \Vy_i, i\in V$ with $\Vc$, with $\Vc$ stacked as $\Vc_i=\Vx_i,\Vc_{n+i}=\Vy_i$:
%(\ref{eq:F2a}--\ref{eq:F2c}).
%\begin{figure*}[!h]
% IEEE uses as a separator
%\vspace*{4pt}
%\hrulefill
%\normalsize
% Store the current equation number.
%\setcounter{MYtempeqncnt}{\value{equation}}
%\setcounter{equation}{7}
\begin{subequations}
\begin{align}
[F\br{\Vc}]_i & \coloneq G_{ii}\br{\powb{\Vx_i}{2}+\powb{\Vy_i}{2}}  \label{eq:F2a} \\ &
 -\sum_{j \sim i} B_{ij}\br{\Vy_i\Vx_j-\Vx_i\Vy_j}  \nonumber \\ &
 -\sum_{j\sim i} G_{ij}\br{\Vx_i\Vx_j+\Vy_i\Vy_j}-P_i ,i\in V \nonumber\\
[F\br{\Vc}]_{n+i} & \coloneq B_{ii}\br{\powb{\Vx_i}{2}+\powb{\Vy_i}{2}} \label{eq:F2b} \\ &
+\sum_{j\sim i}B_{ij}\br{\Vx_i\Vx_j+\Vy_i\Vy_j} \nonumber\\&
+\sum_{j\sim i}G_{ij}\br{\Vy_i\Vx_j-\Vx_i\Vy_j}-Q_i ,i\in\Lo \nonumber \\
[F\br{\Vc}]_{n+i} & \coloneq \powb{\Vx_i}{2}+\powb{\Vy_i}{2}-\Vset_i^2,i\in\G\label{eq:F2c}
\end{align}\label{eq:F}	
\end{subequations}
% Restore the current equation number.
%\setcounter{equation}{\value{MYtempeqncnt}}
%\end{figure*}
%We will use $F\br{\Vc}=0$ to denote the power flow operator.
%the power-flow operator $F$ (\ref{eq:F2a}--\ref{eq:F2c}) in terms of voltages $V$, i.e. the power-flow equalities within ACOPF, as in \cite{corollaryj15}. 

Whether a point is in \emph{a} domain of monotonicity can be tested by the simple comparison of $\alpha$ and $\alpha_0$: % without solving (\ref{EqDjA}--\ref{EqDjF}):

% TODO: Jakub to polish
\begin{proposition}
% MARTIN, JIE: ACPF is the feasibility part, this is not a typo.
For every instance of ACPF, there exists 
a universal constant $a_0 \in \R$ and a function $\alpha$ of the instance of ACPF and a vector $x \in \R^m$ such that if $\alpha(F, x) \le \alpha_0$, then $x$ is an approximate zero
of $F$. 
\end{proposition}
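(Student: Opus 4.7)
The plan is to treat the statement as a direct specialization of Proposition~\ref{alphabeta} to the map $F$ defined by equations (\ref{eq:F2a})--(\ref{eq:F2c}). The essential content is to recognize the ACPF operator as a real polynomial system of the form required by Smale's point-estimation theory, and then to transport the universal constant $\alpha_0$ from Proposition~\ref{alphabeta} across as the constant $a_0$ asserted here.

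First I would verify polynomiality: each coordinate function $[F(V^c)]_i$ and $[F(V^c)]_{n+i}$ in (\ref{eq:F2a})--(\ref{eq:F2c}) is a polynomial of total degree $2$ in the $2n$ real variables $V^x_i, V^y_i$, since every term is either a squared coordinate, a bilinear product of two coordinates, or a constant, scaled by the admittance entries $G_{ij}, B_{ij}$ or by the data $P_i, Q_i, v_i^2$. Hence $F:\R^{2n}\to\R^{2n}$ is a polynomial system in the precise sense required by Proposition~\ref{alphabeta}, with $m = 2n$ unknowns and $2n$ equations.

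Second I would define the function $\alpha(F, x)$ and its auxiliaries $\beta(F, x), \gamma(F, x)$ by specializing (\ref{alpha})--(\ref{gamma}) to $f = F$, and set $a_0 := \alpha_0 = (13-3\sqrt{17})/4$. Proposition~\ref{alphabeta} then applies verbatim and yields the stated conclusion: whenever $\alpha(F, x) \le a_0$, the vector $x$ is an approximate zero of $F$, and moreover the associated zero $x'$ satisfies $\| x - x' \| \le 2\beta(F, x)$.

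The argument carries no genuine obstacle, being a pure specialization. The one observation worth recording is that, because $F$ has total degree exactly $2$, the higher tensors $\nabla^{(k)} F$ vanish identically for $k \ge 3$, so the supremum in (\ref{gamma}) collapses to the single term $k = 2$. This makes $\gamma(F, x)$ a closed-form expression in the Hessian of $F$ and therefore makes the criterion $\alpha(F, x) \le a_0$ cheaply testable from only $F(x)$, $\nabla F(x)$, and $\nabla^{(2)} F(x)$ — exactly the data required by the on-the-fly solver-switching strategy advertised in the introduction.
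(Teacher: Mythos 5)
Your proposal is correct and follows the same route as the paper's own proof, namely applying Proposition~\ref{alphabeta} directly to the real polynomial system $F$ in the stacked variables $V^c$ (the paper also notes an alternative via a complex-variable version of the theorem, e.g.\ Theorem~4.3 of \cite{DEREN1995}, which you do not need). Your added observations --- that each coordinate of $F$ is a degree-$2$ polynomial and that the supremum in \eqref{gamma} therefore collapses to the $k=2$ term --- are accurate and in fact make the argument more explicit than the paper's one-line proof.
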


\begin{proof}
One can either Proposition \ref{alphabeta} to a problem in $\Vc$,
which stacks the real and imaginary parts of the complex-valued vector to obtain a real-valued
problem,
or one may apply an extension of the proposition to complex-valued polynomials, such as Theorem 4.3 in \cite{DEREN1995}.
\end{proof}

Obviously, one needs to compute $\beta$ \eqref{beta} and $\gamma$ \eqref{gamma} to compute $\alpha$ \eqref{alpha}. 
Because $\gamma(f,x)$ is difficult to compute in practice, we wish to establish a bound, e.g., when $m = n$.
Let us first define some auxiliary quantities, which will be used in the following proposition. % ~\cite{alphacertified}
Define a pseudo-norm $\| \cdot \|_1$ on $\R^n$ by 
$\| x \|_1^2 \coloneq 1 + \sum_{i=1}^n |x_i|^2$,
along with the auxiliary diagonal matrix $\Delta_{(d)}$ with entries $\Delta_{(d)}(x)_{i,i} \coloneq d_i^{1/2}\|x\|_1^{d_i-1}$.
Let us consider the degree-$d$ polynomial $g(x) \coloneq \sum_{|\nu|_p \leq d} g_{\nu} x^{\nu}$
where the coefficients $g_{\nu} \in \R$ and $x^{\nu} \coloneq x_1^{\nu_1} \cdots x_n^{\nu_n}$ with $|\nu|_p \coloneq \sum_{i=1}^n \nu_i$.
We can define the following norm:
\[
    \|g\|_p^2 \coloneq \sum_{|\nu|_p \leq d} |g_\nu|^2 \frac{\nu!(d-|\nu|)!}{d!},
\]
%\todo[inline]{what is $a_\nu$? should it be $g_\nu$? Jie, HW: show that this is a norm!}
%\reply[inline]{Jie: Corrected. Reference is added. Proof is simple since $|g_\nu|$ satisfy absolute scalability and triangular inequality and zero vector property.}
where $\nu! \coloneq \prod_{i=1}^n \nu_i!$.
Next, we define a norm on the polynomial system $f$ by simply writing
\[
    \|f\|_p^2 \coloneq \sum_{i=1}^n \|f_i\|_p^2.
\]
Finally, define
\[
    \mu(f,x) \coloneq \max \{1, \|f\|_p \cdot \|[\nabla f(x)]^\dagger  \Delta_{(d)}(x)\|\}.
\]
With these quantities, we arrive at the following proposition bounding $\gamma(f,x)$:

\begin{proposition}[\cite{ShubSmale1993,Cucker1999}]  
    \label{mubound} %alphacertified
Let $f:\R^{n} \mapsto \R^{n}$ be a polynomial system with $d_i \coloneq \degree(f_i)$ and $D \coloneq \max_i \{d_i\}$.
If $x\in\R^n$ such that $[\nabla f(x)]$ is invertible, then
\begin{equation}
\gamma (f, x) \leq \frac{\mu(f, x) D^{3/2}}{2\|x\|_1}.
\end{equation}
%\todo[inline]{$\mu$ was defined first for $f$ and then for $x$, ie $\mu(f,x)$. Here it is switched....}
%\reply[inline]{Jie: to be consistent, I corrected it to (f, x).}
\end{proposition}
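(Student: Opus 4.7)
The plan is to estimate each term inside the supremum defining $\gamma(f,x)$ by inserting the diagonal weighting matrix $\Delta_{(d)}(x)$ and its inverse between the Moore--Penrose pseudoinverse and the higher-derivative tensor, so that one factor is directly controlled by $\mu(f,x)$ through its definition while the other factor is controlled by a classical Bombieri--Weyl estimate on higher derivatives of a polynomial.

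First, for each integer $k > 1$, submultiplicativity of the operator norm gives
\begin{equation*}
\left\|\frac{[\nabla f(x)]^\dagger [\nabla^{(k)}f](x)}{k!}\right\| \leq \|[\nabla f(x)]^\dagger \Delta_{(d)}(x)\| \cdot \left\|\frac{\Delta_{(d)}(x)^{-1} [\nabla^{(k)}f](x)}{k!}\right\|.
\end{equation*}
The first factor is at most $\mu(f,x)/\|f\|_p$ by the very definition of $\mu$. For the second factor I would argue row by row through the component polynomials $f_i$ of degree $d_i$. The classical higher-derivative bound for the Bombieri--Weyl norm --- which follows from its unitary invariance under the Veronese embedding --- yields an estimate of the form $\tfrac{1}{k!}\|\nabla^{(k)} f_i(x)\| \leq \|f_i\|_p \sqrt{\binom{d_i}{k}\,k!}\, \|x\|_1^{d_i-k}$. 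Dividing the $i$-th row by the diagonal entry $d_i^{1/2}\|x\|_1^{d_i-1}$ of $\Delta_{(d)}(x)$ collapses the dependence on $\|x\|_1$ to a single factor $\|x\|_1^{1-k}$, and summing the squared row norms, using $\|f\|_p^2 = \sum_i \|f_i\|_p^2$ and $d_i \leq D$, bounds the second factor by $\|f\|_p \binom{D}{k}^{1/2} (k!)^{1/2} \|x\|_1^{1-k}$.

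Combining the two factors, cancelling $\|f\|_p$, and taking the $(k-1)$-th root gives
\begin{equation*}
\gamma(f,x) \leq \frac{\mu(f,x)}{\|x\|_1} \sup_{k > 1} \left(\binom{D}{k}\, k!\right)^{1/(2(k-1))},
\end{equation*}
where I have also used $\mu(f,x)^{1/(k-1)} \leq \mu(f,x)$ since $\mu(f,x) \geq 1$. The remaining step, and the main technical obstacle, is the purely combinatorial inequality $\binom{D}{k} k! \leq D^{3(k-1)}/4^{k-1}$ for every integer $k$ with $2 \leq k \leq D$, which is equivalent to bounding the supremum above by $D^{3/2}/2$. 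The case $k=2$ reduces to $D(D-1) \leq D^3/4$, which holds for all $D \geq 2$ because $(D-2)^2 \geq 0$; the case $k \geq 3$ I would handle by a monotonicity/induction argument on $k$, which is essentially the combinatorial calculation underlying the classical $D^{3/2}$ bound on $\gamma$ in the treatment of Shub--Smale \cite{ShubSmale1993,Cucker1999}.
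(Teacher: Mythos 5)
The paper itself gives no proof of Proposition~\ref{mubound}; it is quoted as a known result from \cite{ShubSmale1993,Cucker1999}, so your attempt has to be measured against the standard proof there. Your architecture is exactly that proof: insert $\Delta_{(d)}(x)$ and its inverse between $[\nabla f(x)]^\dagger$ and the $k$-th derivative tensor, absorb $\|[\nabla f(x)]^\dagger\Delta_{(d)}(x)\|\le\mu(f,x)/\|f\|_p$, control the other factor by a Bombieri--Weyl higher-derivative estimate, and use $\mu(f,x)\ge 1$ to discard the $(k-1)$-th root of $\mu$. That part is right.

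There are, however, two genuine problems. First, the higher-derivative estimate you invoke, $\tfrac{1}{k!}\|\nabla^{(k)}f_i(x)\|\le\|f_i\|_p\sqrt{\binom{d_i}{k}\,k!}\;\|x\|_1^{d_i-k}$, is false. Take $n=1$ and $f_i(x)=x^{d}$: then $\|f_i\|_p=1$ and $\tfrac{1}{k!}|f_i^{(k)}(x)|=\binom{d}{k}|x|^{d-k}$ exactly, whereas your right-hand side is $\sqrt{d!/(d-k)!}\,(1+x^2)^{(d-k)/2}$; for $d=4$, $k=2$ and $x$ large this is about $\sqrt{12}\,x^2<6x^2$. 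The correct (and sharp) constant is $\binom{d_i}{k}$, not $\sqrt{\binom{d_i}{k}k!}$. Second, the combinatorial inequality you defer to the end is the load-bearing step, it is left unproven, and it is calibrated to the wrong derivative bound. The repair is standard: with the correct estimate, dividing row $i$ by the diagonal entry $d_i^{1/2}\|x\|_1^{d_i-1}$ leaves $\binom{d_i}{k}d_i^{-1/2}\|f_i\|_p\|x\|_1^{1-k}$, so the inequality actually needed is $\binom{d}{k}\le d^{1/2}\bigl(D^{3/2}/2\bigr)^{k-1}$ for $2\le k\le d\le D$. This one does hold: for $k=2$ it reads $d(d-1)/2\le d^2/2$, and for $k\ge 2$ in general one uses $\binom{d}{k}\le d^k/k!$ together with $2^{k-1}\le k!\le k!\,d^{(k-2)/2}$. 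So the route is the right one, but as written one of your two key lemmas is incorrect and the other is missing.
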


Notice that %the right-hand side is easily computable, but that 
the proposition assumes a 
polynomial system, rather than a polynomial optimisation problem.

\section{The Theory}

We extend the approach to polynomial optimisation problems (POP).
%in two ways, which differ in the treatment of inequalities.
Considering the developed insights \cite{lasserre2013lagrangian} into the availability and strength of 
certain Lagrangian relaxations of a POP, 
we derive a test where knowing only the relaxation and its derivatives at a particular point, 
we can decide whether one can switch to the Newton method on the polynomial relaxation. 
Although there are many options for picking the relaxation, we suggest  
to track the active set and wait until it stabilises. 
Then, one may consider a polynomial,
in whose construction inequalities in the active set are treated as equalities, 
while the remaining inequalities are disregarded. 
%This polynomial and its derivatives can again be used to decide whether one can switch to the Newton method.
Notice that unless one runs the Newton method on that very polynomial, one may need to back-track, 
  whenever the active set changes while running the Newton method.
%This part is useful, inasmuch it can be implemented efficiently,
%albeit less elegant. 

\subsection{The Preliminaries}

In order to describe the approaches formally, we introduce some notation.
Let us denote the polynomial ring over the reals by $\R[x]$ and consider the compact basic semi-algebraic set $\K$ defined by:
\begin{align}
\label{setk}
\K\, \coloneq \,\{\,x\in\R^m \::\: & g_j(x)\,\geq\,0,\quad j=1,\ldots,p, \\ \notag
                          & h_k(x)\,=\,0,\quad k = 1, \ldots, q \} 
%                          & h_l(x)\,>\,0\quad  l = 1, \ldots, r
\end{align}
for some  $g_j\in\R[x]$, $j=1,\ldots,p$ in $x \in \R^m$,
          $h_k\in\R[x]$, $k=1,\ldots,q$.
The corresponding polynomial optimization problem (\emph{POP}) is:
\begin{equation}
P:\quad f^* \coloneq \displaystyle\min_{x \in \R^m}\:\{f(x)\::\: x\in\K\:\}
\label{pop}
\end{equation}
where $f\in\R[x]$. %is again a polynomial in $\x \in \R^m$. 
We use $f^*$ to denote the value of the objective function $f$ at the optimum of the POP \eqref{pop};
notice that there need not be a unique point at which $f^*$ is attained.
We use $\mathbb P^m$ to denote the space of all possible descriptions of a POP \eqref{pop}
 in dimension $m$.
For additional background material on polynomial optimisation, we refer to \cite{Handbook}.

In a departure from the tradition, 
we use the term \emph{Lagrangian} loosely, to mean a function ${\tilde L}: \R^{\tilde m} \mapsto \R, \tilde m > m$ associated with 
  a particular instance of a POP \eqref{pop} in $\R^m$.
In the best known example, one has $\tilde m = m + p + q$ and $\tilde x \in \R^{\tilde m}$ is the concatenation of the 
  variable $x\in\R^m$ and the so called Lagrangian coefficients $\lambda$ associated with the constraints:
\begin{align}
L(x, \lambda) \coloneq & f(x) %- \sum_{j=1}^{m} \lambda_j g_j(\x).
+ \sum_{j = 1}^{p} \lambda_j g_j(x) 
+ \sum_{k = 1}^{q} \left( \lambda_k \max\{ 0, h_k(x) \} \right)
\label{Ltrad}
\end{align}
The textbook version \cite{Bertsekas} of a Lagrangian relaxation is:
\begin{align}
\rho_0 \coloneq & \max_{\lambda \in \R^{p+q} } \min_{x \in \R^m} L(x, \lambda)
\label{rho}
\end{align}
and it is known that $\rho_0 \leq f^*$. 
One often adds additional regularisation terms 
\cite{MarecekTakac2015}, which may improve the rate of convergence, but do not remove
the fact that one may have $\rho_0 \ll f^*$.
One may replace the $\max$ in  \eqref{Ltrad} by constraints on $\lambda_k$ to be non-negative in \eqref{rho}, 
but the non-negativity constraints also make it impossible to apply $\alpha$-$\beta$ theory directly. 
%Unless \eqref{rho} happens to very particular (e.g., convex), one can hope to find only stationary points
%$(\bar \x, \bar \lambda) \in \R^N \times \R^m_+$ in polynomial time. 

Using this looser definition of the Lagrangian, we define the domain of monotonicity of a \eqref{pop}, with respect to 
a particular Lagrangian:

\begin{definition}[Monotonicity domain with respect to ${\tilde L}$]
%Let $\x \in \R^N$ and $L: \R^N \to \R$ and 
For any $\tilde x \in \R^{\tilde m}$ and ${\tilde L}: \R^{\tilde m} \mapsto \R$,
consider a sequence ${\tilde x}_0 \coloneq \tilde x$, ${\tilde x}_{i + 1} \coloneq N_{\tilde L}(\tilde x_i)$ for $i > 0$. 
The point $\tilde x$ is within the monotonicity with respect to ${\tilde L}$ 
if this sequence is well
defined and there exists a point $\tilde x' \in \R^{\tilde m}$ such that $\tilde L(\tilde x') = 0$ and
\begin{align}
||{\tilde x}_i - {\tilde x'} || \le (1/2)^{2^i - i} ||{\tilde x}_0 - {\tilde x'}||.
\end{align}
Then, we call $\tilde x'$ the \emph{associated stationary point} of $\tilde x$ 
and say that $\tilde x$ \emph{represents} $\tilde x'$.
\end{definition}

Notice that we use tilde to stress the variable parts, such as the Lagrangian $\tilde L$ and its dimension $\tilde m$.
Notice also that domains of monotonicity are known also as the region of attraction, the basin of attraction, etc.

\subsection{The Assumptions}

Recently, it has been realised that one can approximate the global optimum $f^*$ as closely as possible, 
in case one applies the relaxation to a problem $\tilde{P}$ equivalent to $P$, 
which has sufficiently many redundant constraints.
To state the result, we need some additional technical assumptions:
\begin{assumption}
\label{ass1}
$\K$ is compact and $0\leq g_j(x) \leq 1$ on $x\in \K$ for all
$j=1,\ldots,p$, possibly after re-scaling. %$k=1,\ldots,q$. 
%\todo[inline]{Jakub, I am not sure what is $q$? should we just remove it?}
Moreover, the family of polynomials $\{g_j, 1-g_j\}$ generates the algebra $\R[x]$.
\end{assumption}
Notice that if $\K$ is compact, one may always rescale variables $x_i$ and add redundant constraints $0\leq x_i\leq 1$ 
\cite{josz2016strong} for all $i=1,\ldots,p$, such that 
the family $\{g_j,1-g_j\}$ generates the algebra $\R[x]$ and Assumption \ref{ass1} holds.
Further, we assume:

\begin{assumption}
\label{ass2}
There exists a unique point $x^* \in \K$, where $f^*$ is attained.  
\end{assumption}
Notice that one can easily construct an example with two generators 
and a single other bus, where this assumption is violated. At the same
time, it is easy to see that an arbitrarily small perturbation makes
it possible to satisfy the assumption. 
Alternatively, one could replace Assumption \ref{ass2} with an assumption 
on the separation of stationary points, as discussed in \cite{Corless1997}.

\subsection{The Results}

It is well-known that one can construct:

\begin{lemma}[Lasserre Hierarchy]
\label{convexlagrangian}
Let Assumption \ref{ass1} hold for $K$ \eqref{setk} underlying a POP $P$ with optimum $f^*$.
%with global optimum $f^*$.
%There exists a \emph{convex} $\tilde{P}_d$, $d\in\N$, 
For every $\epsilon>0$, there 
exists $d_\epsilon\in\N$ such that for every $d\geq d_\epsilon$,
there exists a \emph{convex} Lagrangian relaxation of $P$, 
which yields a lower bound $f^*-\epsilon\leq \rho_d \leq f^*$.
\end{lemma}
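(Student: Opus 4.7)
The plan is to realise the claimed convex Lagrangian relaxation as a Lasserre sum-of-squares relaxation at a sufficiently high level of the hierarchy, with the existence of a certificate at that level guaranteed by Putinar's Positivstellensatz.

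First, I would check that Assumption \ref{ass1} supplies the Archimedean hypothesis on the quadratic module $M(g_1,\ldots,g_p)$. Because each $g_j$ lies in $[0,1]$ on $\K$ and the family $\{g_j,1-g_j\}$ generates $\R[x]$, one can express $N-\sum_i x_i^2$ as an element of $M$ for some constant $N$, which is precisely the Archimedean condition invoked by Putinar. Second, I would apply Putinar's theorem to $p_\epsilon(x) \coloneq f(x)-(f^*-\epsilon)$. This polynomial is strictly positive on $\K$ (bounded below by $\epsilon$), so there exist sums-of-squares $\sigma_0,\sigma_1,\ldots,\sigma_p$ and polynomials $\psi_1,\ldots,\psi_q \in \R[x]$ with
\[
f(x) - (f^*-\epsilon) \;=\; \sigma_0(x) + \sum_{j=1}^{p}\sigma_j(x)\,g_j(x) + \sum_{k=1}^{q}\psi_k(x)\,h_k(x).
\]
Set $d_\epsilon$ to be the maximum total degree appearing in this expansion.

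Third, for every $d\ge d_\epsilon$ consider the truncated SOS Lagrangian
\[
\tilde L_d(x;\sigma,\psi) \;\coloneq\; f(x) - \sigma_0(x) - \sum_{j=1}^{p}\sigma_j(x)\,g_j(x) - \sum_{k=1}^{q}\psi_k(x)\,h_k(x),
\]
where each $\sigma_j$ is a sum of squares and each $\psi_k \in \R[x]$ has degree at most $d$, and define
\[
\rho_d \;\coloneq\; \sup\{\rho\in\R \::\: \tilde L_d(x;\sigma,\psi) = \rho \text{ as a polynomial in } x, \text{ for some feasible } (\sigma,\psi)\}.
\]
The SOS certificate produced in the previous step is feasible with $\rho=f^*-\epsilon$, so $\rho_d \ge f^*-\epsilon$. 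Evaluating any feasible triple at the minimiser $x^* \in \K$ (where $g_j\ge 0$, $h_k=0$, and $\sigma_0\ge 0$) gives $\rho \le f(x^*)=f^*$. Parameterising each $\sigma_j$ by its Gram matrix renders the feasible set a spectrahedron, so the outer problem is an SDP and hence convex; this is the convex Lagrangian relaxation in the loose sense of Section 3.1.

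The main obstacle is matching this SOS Lagrangian to the $\tilde L:\R^{\tilde m}\mapsto\R$ formalism of the paper, since the polynomial multipliers enlarge the parameter space substantially: the Gram matrices of $\sigma_0,\ldots,\sigma_p$ together with the coefficient vectors of $\psi_1,\ldots,\psi_q$ constitute the additional $\tilde m - m$ coordinates, and one must verify that this enlargement is compatible with the subsequent application of $\alpha$-$\beta$ theory. The remaining steps — degree truncation, Gram-matrix coordinates, and the fact that $d_\epsilon$ is only guaranteed to exist rather than being known effectively in $\epsilon$ — are standard Lasserre-hierarchy bookkeeping.
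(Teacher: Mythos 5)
Your proof is correct and takes essentially the same route as the paper, which simply cites Theorem~3.6 of Lasserre \cite{Lasserre2006} for the convergence of the hierarchy; you have in effect reproduced the standard proof of that theorem via a Putinar certificate for $f-(f^*-\epsilon)$, truncation at degree $d_\epsilon$, and the sandwich $f^*-\epsilon\leq\rho_d\leq f^*$. The only loose point is your claim that Assumption~\ref{ass1} yields the Archimedean condition ``because the family generates the algebra'': to make this precise, observe that $1-g_j^2=(1-g_j)+g_j(1-g_j)^2+g_j^2(1-g_j)$ lies in the quadratic module generated by $\{g_j,1-g_j\}$ and invoke the standard lemma that a quadratic module containing $N_j-g_j^2$ for a set of algebra generators $g_j$ is Archimedean (or simply add the redundant box constraints $0\leq x_i\leq 1$ that the paper mentions after Assumption~\ref{ass1}).
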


\begin{proof}
The proof follows from Theorem~3.6 of Lasserre \cite{Lasserre2006}, when 
one considers a Lagrangian relaxation of the semidefinite programs.
There, the strong duality can be assured by a reformulation of the POP, cf. \cite{josz2016strong}.
\end{proof}

Notice, however, that 
these Lagrangians, albeit convex, are not polynomial due to the presence of the semidefinite constraint.
Moreover, for $d \ge 1$, a single iteration of minimising the \emph{convex} Lagrangian, even using a first-order method, can be computationally much more demanding than 
a single iteration of second-order methods for the basic Lagrangian $\rho_0$.
We would hence like to study the domains of monotonicity with respect to the various
other Lagrangians. %optima and stationary points of such Lagrangians.

%\begin{lemma}
%There exists a universal constant $a_0 \in \R$ and a function $\alpha:
%\mathbb P^m \times \R^m \mapsto \R$
%such that 
%for all $m \in \Z, p \in \mathbb P^m,$ and $x \in \R^m$,
%there exists $\tilde{P}_d$, $d\in\N$, such that for every $\epsilon>0$, there 
%exists $d_\epsilon\in\N$ such that for every $d\geq d_\epsilon$,
%the Lagrangian relaxation $\tilde{L}_d$ of $\tilde{P}_d$, 
%yields a lower bound $f^*-\epsilon\leq \rho_d \leq f^*$.
%If $\alpha(\nabla \tilde{L}_d, x) \le \alpha_0$, then $x$ is in the domain of monotonicity of
%stationary point $x'$ of Lagrangian relaxation $\tilde{L}_d$ of $\tilde{P}_d$.
%\label{weakling}
%\end{lemma}

%\begin{proof}
%We apply Proposition \ref{alphabeta} %the reasoning of Chen \cite{chen1994approximate} 
%to first-order conditions of a Lagrangian relaxation of Lemma~\ref{lagrangian} of the 
%polynomial optimisation problem \eqref{pop}.
%\end{proof}

%Extending the reasoning to convex Lagrangians of Lemma~\ref{convexlagrangian}
%is non-trivial, because the convex Lagrangians are not necessary polynomial (analytic),
%as defined. 
%However:

Specifically, notice that one can obtain Lagrangians by distinguishing which inequalities
are satisfied with equality at a particular point. 
At any given point $x$, we can evaluate what inequalities of the 
POP \eqref{pop} are active, i.e., satisfied with equality.
Let us denote the index set active inequalities $A(\cdot) \subseteq I$,
\begin{align}
\label{Aeps}
A(x, \epsilon) & \coloneq \left\{ k \; | \; k = 1, \ldots, q, h_i(x) \leq \epsilon \right\},
%A(x) & \coloneq A(x, 0),
\end{align}
%extending the notation of Section 3.3 of Bertsekas \cite{bertsekas1999nonlinear}.
At an arbitrary point $x \in \R^n$, we can 
evaluate $A(\cdot)$ and 
construct a locally valid, but polynomial
 Lagrangian:
    \begin{align} \label{Lprime}
            L'(x, \lambda) \coloneq & f(x) %- \sum_{j=1}^{m} \lambda_j g_j(\x).
            & + \sum_{k = 1}^{q} \lambda_k h_j(x) 
            & + \sum_{j = 1}^{p} \left( \mathbbm{1}_{j \in A} \lambda_j g_j(x) \right),
    \end{align}

and it is clear that:

\begin{lemma}
\label{activeset}
Let Assumptions \ref{ass1} and \ref{ass2} hold for $\K$ \eqref{setk}.
%For every $P$, There exists $\tilde{P}_d$, such that
For every $\epsilon>0$, there 
exists $d_\epsilon\in\N$ such that for every $d\geq d_\epsilon$,
the Lagrangian relaxation of $\tilde{P}_d$, 
yields a lower bound $f^*-\epsilon\leq \rho_d \leq f^*$ achieved at $x_d^*$ and
the active set $A(x_d^*, \epsilon)$ induces $L'(x, \lambda)$
with optimum $\rho_d$.
\end{lemma}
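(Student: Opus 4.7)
The plan is to combine the Lasserre-hierarchy bound of Lemma~\ref{convexlagrangian} with a continuity/active-set stabilisation argument, and then invoke complementary slackness to identify the optimum of $L'$ with $\rho_d$.

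First, I would invoke Lemma~\ref{convexlagrangian} directly to obtain, for the given $\epsilon>0$, an index $d_\epsilon\in\N$ such that for every $d\ge d_\epsilon$ the convex Lagrangian relaxation of the lifted problem $\tilde P_d$ has value $\rho_d$ satisfying $f^*-\epsilon\le\rho_d\le f^*$, with a minimiser $x_d^*\in\K$; compactness of $\K$ (Assumption~\ref{ass1}) guarantees such a minimiser exists after possibly restricting to a subsequence. This delivers the first claim in the lemma verbatim, so the remaining content is the identification involving $A(x_d^*,\epsilon)$ and $L'$.

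Second, I would establish that $x_d^*\to x^*$ as $d\to\infty$. By compactness of $\K$, any subsequence of $\{x_d^*\}$ has a further subsequence converging to some $\bar x\in\K$; since $f$ is continuous and $f(x_d^*)\to f^*$, $\bar x$ is a minimiser, and Assumption~\ref{ass2} forces $\bar x=x^*$. By continuity of each $g_j$, for $d$ large enough one has $g_j(x_d^*)>\epsilon$ whenever $g_j(x^*)>0$, and $g_j(x_d^*)\le \epsilon$ whenever $g_j(x^*)=0$. Hence for all sufficiently large $d$ the index set $A(x_d^*,\epsilon)$ coincides with the true active set $A(x^*,0)$ at the global optimum. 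Up to enlarging $d_\epsilon$, this is the sought stabilisation.

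Third, I would identify the Lagrangian $L'$ of \eqref{Lprime} with the relaxation optimum. Since Assumption~\ref{ass1} guarantees a constraint qualification (via the Archimedean/algebra-generation property) and $x^*$ is the unique minimiser, KKT multipliers $\lambda^*\ge 0$ exist with $\lambda_j^*\, g_j(x^*)=0$ for all $j$; thus $\lambda_j^*=0$ outside $A(x^*,0)$. Substituting these multipliers into the textbook Lagrangian \eqref{Ltrad} kills exactly the inactive terms and leaves the terms that appear in $L'(x,\lambda)$ with the indicator $\mathbbm 1_{j\in A}$. Combining this with step two (which gives $A(x_d^*,\epsilon)=A(x^*,0)$ for $d\ge d_\epsilon$) shows that the $\sup_\lambda\inf_x L'$ problem with indices drawn from $A(x_d^*,\epsilon)$ has the same optimal value as the convex Lagrangian relaxation of $\tilde P_d$, namely $\rho_d$.

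The main obstacle I expect is step three, namely certifying that the optimum of $L'$ inherited from the active set $A(x_d^*,\epsilon)$ really equals $\rho_d$ and not merely some weaker quantity: the convex relaxation lives in a lifted SDP space while $L'$ is a genuine polynomial Lagrangian on $\R^{\tilde m}$, so one must argue that zero duality gap plus complementary slackness is preserved when passing from the lifted moment variables back to $x$. This typically requires the strong-duality reformulation invoked in \cite{josz2016strong} together with a care\-ful handling of the $\epsilon$-active set, which is why the statement only claims equality up to the $\epsilon$-tolerance built into $A(\cdot,\epsilon)$.
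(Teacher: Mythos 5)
There is a genuine gap, and it sits at the very first step of your argument: the existence and nature of the point $x_d^*$. The convex Lagrangian relaxation of Lemma~\ref{convexlagrangian} is posed over a lifted space of moment (pseudo-moment) variables, not over $\K$, and its value $\rho_d$ may be strictly smaller than $f^*$; consequently no point of $\K$ can ``achieve'' $\rho_d$ in the sense of $f(x_d^*)=\rho_d$, and compactness of $\K$ does not produce the required $x_d^*$. Obtaining a genuine point $x_d^*\in\R^m$ from the relaxation optimum is precisely the content of the paper's proof, which invokes the solution-extraction machinery of Henrion and Lasserre \cite{Henrion2005} (building on Propositions~7 and~8 of \cite{Corless1997}): under Assumptions~\ref{ass1} and~\ref{ass2} the optimal moment matrix admits a rank/flatness structure that lets one recover the (unique) minimiser by a Cholesky-type factorisation, and the active set is then read off from that extracted point. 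Your proposal replaces this with an assertion, so the step the lemma actually needs is missing.

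Two further points would need repair even granting the existence of $x_d^*$. First, your claim that $A(x_d^*,\epsilon)=A(x^*,0)$ for large $d$ uses ``$g_j(x_d^*)>\epsilon$ whenever $g_j(x^*)>0$,'' which only follows from continuity when $\epsilon<\min\{g_j(x^*):g_j(x^*)>0\}$; since the same $\epsilon$ governs both the objective tolerance and the active-set tolerance in the statement, you must either shrink $\epsilon$ or decouple the two tolerances. Second, Assumption~\ref{ass1} (the family $\{g_j,1-g_j\}$ generating $\R[x]$, i.e.\ an Archimedean-type condition) is not a KKT constraint qualification such as MFCQ or LICQ, so the existence of multipliers $\lambda^*$ with $\lambda_j^* g_j(x^*)=0$ does not follow from it; your step three, which you yourself flag as the main obstacle, therefore rests on an unsupported premise. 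The paper's route avoids both issues by working directly with the extracted solution and the strong-duality reformulation of \cite{josz2016strong}, rather than with classical complementary slackness on \eqref{Ltrad}.
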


\begin{proof}
    The proof follows from the reasoning of Propositions 7 and 8 of \cite{Corless1997}, as explained by Henrion and Lasserre \cite{Henrion2005}:
    Under Assumptions \ref{ass1} and \ref{ass2}, the moment matrix for $d$ makes it 
    possible to extract the solution by Cholesky decomposition, which in turn allows
    to estimate the active set.
\end{proof}

This allows for the direct application of $\alpha$-$\beta$ theory:

\begin{theorem}
There exists a universal constant $a_0 \in \R$,
such that for all $m \in \N, P \in \mathbb P^m,$
where Assumptions \ref{ass1} and \ref{ass2} hold for $P$,
there exists a $d\in\N$, such that for every $\epsilon>0$, there 
exists $d_\epsilon\in\N$ such that for every $d\geq d_\epsilon$,
there is a Lagrangian relaxation $\tilde{L}_d$ in dimension $\tilde m$, 
and a function $\alpha:
\mathbb P^{\tilde m} \times \R^{\tilde m} \mapsto \R$
such that 
if $\alpha(\nabla \tilde{L}_d, \tilde x) \le \alpha_0$, then $x$ is the domain of monotonicity of
a solution with objective function value $\rho_d$ such that $f^*-\epsilon\leq \rho_d \leq f^*$.
\label{Theorem1}
\end{theorem}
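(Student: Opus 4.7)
The plan is to chain Lemma \ref{activeset} with Proposition \ref{alphabeta} applied to the first-order optimality system of a locally valid polynomial Lagrangian. Fix $m\in\N$ and a POP $P\in\mathbb P^m$ satisfying Assumptions \ref{ass1} and \ref{ass2}, and set $\alpha_0 := (13 - 3\sqrt{17})/4$, the universal constant from Proposition \ref{alphabeta}. For any $\epsilon>0$, Lemma \ref{activeset} supplies $d_\epsilon$ such that for each $d\ge d_\epsilon$ the Lasserre hierarchy produces $\rho_d\in[f^*-\epsilon,f^*]$ attained at an extractable minimizer $x_d^*$ whose active set $A(x_d^*,\epsilon)$ is stable. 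Freezing those inequalities in \eqref{Lprime} yields the \emph{polynomial} Lagrangian $L'$; I would define $\tilde L_d := L'$ on $\R^{\tilde m}$, where $\tilde m$ counts the primal coordinates together with multipliers for the equality constraints and the active inequalities.

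The second step is to cast the stationarity condition $\nabla \tilde L_d(\tilde x)=0$ as a real polynomial system $\R^{\tilde m}\mapsto\R^{\tilde m}$ of exactly the type required by Proposition \ref{alphabeta}, and to define $\alpha(\nabla\tilde L_d,\tilde x)$ through \eqref{alpha}--\eqref{gamma} with Proposition \ref{mubound} supplying a practically computable bound for the $\gamma$ term. Proposition \ref{alphabeta} then yields the desired one-way implication: if $\alpha(\nabla\tilde L_d,\tilde x)\le\alpha_0$, then $\tilde x$ is an approximate zero of $\nabla\tilde L_d$, so the Newton iterates on $\tilde L_d$ started at $\tilde x$ are well defined and converge quadratically to an associated stationary point $\tilde x'$ within distance $2\beta(\nabla\tilde L_d,\tilde x)$ of $\tilde x$.

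To close the argument I would identify the objective value at this associated stationary point. Since $\tilde L_d$ was built by treating $A(x_d^*,\epsilon)$ as equalities, every primal-feasible stationary point of $\tilde L_d$ carries this active set and is therefore a KKT point of $P$; Lemma \ref{activeset} already places $x_d^*$ among these with value $\rho_d\in[f^*-\epsilon,f^*]$. Combined with Assumption \ref{ass2} and the closeness estimate $\|\tilde x-\tilde x'\|\le 2\beta$, this forces any $\tilde x$ in a sufficiently small $\beta$-ball around $(x_d^*,\lambda_d^*)$ to represent exactly that distinguished stationary point, yielding the claim.

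The main obstacle is precisely this last matching step: a nonconvex polynomial Lagrangian can admit several critical points compatible with the same active set (saddles, spurious local minima of $L'$), while the $\alpha$-$\beta$ test only localises monotone convergence and does not by itself pin down the identity of the limit. I expect to resolve this by restricting attention to $\tilde x$ in a $\beta$-neighbourhood of $(x_d^*,\lambda_d^*)$ and invoking the second-order sufficient conditions implicit in Assumption \ref{ass2}, which make $\nabla^2 \tilde L_d(x_d^*)$ nonsingular; nonsingularity is also exactly what is needed for $\mu(\nabla\tilde L_d,\tilde x)$ in Proposition \ref{mubound} to stay finite along iterates. A secondary, more cosmetic difficulty is bookkeeping the dependence of $\tilde m$ on $d$, so that the indicator in \eqref{Lprime} is discharged into a fixed once-and-for-all choice of active set and the Lagrangian is genuinely polynomial.
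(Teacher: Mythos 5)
Your proposal takes essentially the same route as the paper: chain Lemma \ref{activeset} with Proposition \ref{alphabeta} applied to the first-order optimality system of the active-set polynomial Lagrangian $L'$, with Proposition \ref{mubound} making the $\gamma$ term computable. The identification issue you flag at the end --- that the $\alpha$-$\beta$ test certifies quadratic convergence to \emph{some} stationary point of $\nabla L'$ but does not by itself pin the limit's objective value to $\rho_d$ --- is a genuine gap, but it is equally unaddressed in the paper's own proof; your sketch is, if anything, more explicit about where Assumption \ref{ass2} and a $\beta$-neighbourhood restriction must enter to close it.
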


\begin{proof}
The proof follows from the observation that each \emph{convex} Lagrangian of 
Lemma~\ref{convexlagrangian}
is associated with a
non-convex, but polynomial Lagrangian of Lemma \ref{activeset}, and that both Lagrangians of 
will have a function value at their optima bounded from 
below by $f^*-\epsilon$.
Formally, for all $m \in \N, p \in \mathbb P^m,$ and $x \in \R^m$,
there exists $\tilde{P}_d$, $d\in\N$, such that for every $\epsilon>0$, there 
exists $d_\epsilon\in\N$ such that for every $d\geq d_\epsilon$,
both 
the Lagrangian relaxation $\tilde{L}_d$ of $\tilde{P}_d$, 
and the new Lagrangian relaxation of the same problem $L'_d$
yield a lower bound $f^*-\epsilon\leq \rho_d \leq f^*$.
While minimising the convex Lagrangian of the polynomial optimisation problem \eqref{pop},
we can apply Proposition \ref{alphabeta} %the reasoning of Chen \cite{chen1994approximate} 
to the first-order 
conditions of the corresponding Lagrangian $L'_d$ of Lemma \ref{activeset}.
\end{proof}

In terms of the alternating-current optimal power flow problem (ACOPF),
the theory can be summarised thus:

\begin{corollary}
There exists 
a universal constant $a_0 \in \R$,
such that
for every instance of ACOPF, 
there exists $\delta \in \R, \delta \ge 0$ 
and a function $\alpha: \R^m \mapsto \R$
%$, \beta, f$ of 
specific to the instance of ACOPF, such that for any 
$\epsilon > \delta$ and 
vector $x \in \R^m$ if $\alpha(x) \le \alpha_0$, then $x$ is 
%an approximate stationary point 
in the domain of monotonicity of an optimum of the instance of ACOPF, which is no more than 
$\epsilon$ away from the value of the global optimum with respect to its objective function.
\end{corollary}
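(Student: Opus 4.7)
The plan is to reduce the corollary directly to Theorem~\ref{Theorem1} by exhibiting ACOPF as an instance of the POP framework \eqref{pop}, then transporting the monotonicity guarantee through the Lasserre hierarchy, with $\delta$ absorbing the unavoidable gap coming from a finite relaxation order. First, I would write an ACOPF instance as a POP in the rectangular voltage variables $\Vc\in\R^{2n}$: the cost $f$ is a polynomial (typically quadratic) in generator injections, which are themselves polynomial in $\Vc$ by the same algebra as the operator $F$ in \eqref{eq:F2a}--\eqref{eq:F2c}; the power-balance relations at $\Lo$ and slack buses give the equality constraints $h_k$; and the voltage-magnitude, thermal, and generation limits give the inequality constraints $g_j$. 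This is a standard reformulation and shows that every ACOPF instance lies in $\mathbb P^{2n}$.

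Next, I would verify Assumptions~\ref{ass1} and \ref{ass2} so that Theorem~\ref{Theorem1} applies. For Assumption~\ref{ass1}, compactness of $\K$ is inherited from the voltage-magnitude bounds (bounded $|V_i|$ implies bounded $\Vx_i,\Vy_i$); the remaining conditions are then obtained by the standard rescaling to $[0,1]$ variables and the inclusion of redundant box constraints $0\leq x_i,\;1-x_i\geq 0$, after which $\{g_j,1-g_j\}$ generates $\R[x]$ as noted in the text following Assumption~\ref{ass1}. For Assumption~\ref{ass2}, the uniqueness of $x^*$ can fail for ACOPF (e.g.\ the two-generator example mentioned after Assumption~\ref{ass2}); the fix is an arbitrarily small generic perturbation of $(P_i,Q_i)$ or of the cost coefficients, whose size is absorbed into $\delta$. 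Once both assumptions hold, Lemma~\ref{convexlagrangian} and Lemma~\ref{activeset} supply, for any $\epsilon'>0$, a relaxation order $d\ge d_{\epsilon'}$ together with the active-set polynomial Lagrangian $L'_d$, whose stationary points coincide with those of the convex relaxation of order $d$ up to accuracy $\epsilon'$.

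With this setup, I would define the instance-specific function $\alpha(x) := \alpha(\nabla L'_d,\,x)$, with $\nabla L'_d$ read from \eqref{Lprime} after inserting the active set computed at the relaxation's candidate optimum via the Cholesky/moment-matrix extraction referenced in the proof of Lemma~\ref{activeset}. Applying Theorem~\ref{Theorem1} to this $\alpha$ and the universal constant $\alpha_0\approx 0.157671$ of Proposition~\ref{alphabeta} yields: whenever $\alpha(x)\le\alpha_0$, the iterate $x$ lies in the monotonicity domain of a stationary point $x^*_d$ of $L'_d$ whose objective value is within $\epsilon'$ of $f^*$. Setting $\delta$ to be the sum of the perturbation used to enforce Assumption~\ref{ass2} and any residual gap from using a fixed achievable order $d$, any tolerance $\epsilon>\delta$ is achievable and the conclusion of the corollary follows.

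The main obstacle I anticipate is not the chain of reductions but the justification that the active set identified at the relaxation's numerical optimum really is the correct one to freeze in $L'_d$: when several inequalities are near-active, the Cholesky-based extraction of Lemma~\ref{activeset} can misclassify borderline constraints, and the resulting $L'_d$ would have stationary points unrelated to the POP optimum. I would handle this by choosing the thresholding parameter in $A(x,\epsilon)$ in \eqref{Aeps} as a function of $d$ and of the separation between active and inactive constraints at $x^*_d$, and by absorbing any residual misclassification into $\delta$; this is the price we pay for replacing the convex but non-polynomial Lagrangian of Lemma~\ref{convexlagrangian} with the polynomial but only locally valid $L'_d$ required by $\alpha$-$\beta$ theory.
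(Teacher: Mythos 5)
Your proposal is correct and follows essentially the same route as the paper, whose entire proof reads ``By Theorem~\ref{Theorem1}; the $\delta$ accounts for the perturbation'' --- you have simply made explicit the steps the authors leave implicit (casting ACOPF as a POP in the rectangular voltage variables, securing Assumption~\ref{ass1} by rescaling and Assumption~\ref{ass2} by a small perturbation absorbed into $\delta$, then invoking Theorem~\ref{Theorem1}). Your closing caveat about misidentifying the active set is a legitimate practical concern, but it does not change the logical structure, which matches the paper's.
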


\begin{proof}
By Theorem~\ref{Theorem1}. The $\delta$ accounts for the perturbation. 
\end{proof}

In the hybridisation we propose, one starts by solving a convexification
and construction of the active set
in the outer loop.
Then, one may test of the stability of the active set.
Whenever the active set seems stable and the test %of Chen \cite{chen1994approximate} 
of Proposition \ref{alphabeta} applied to $L'$
allows, 
we we switch to the Newton method on the non-convex Lagrangian $L'$.
Some back-tracking line search may be employed withinin the Newton method, 
until a sufficient decrease in $L$ is observed.
Although this algorithm may seem somewhat crude,
it seems to perform well.
%considering the availability of testable sufficient conditions for global optimality \cite{6655990},
%one can often certify the output as being optimal.

Alternatively, one may employ a variant, whose schematic overview is in Algorithm~\ref{alg:HybridActiveset}. 
There, we consider first-order optimality conditions of $L'$ in the test on Line 
\ref{activset-stable-test},
but switch to the Newton method on the first-order optimality conditions of \eqref{Ltrad}, while memorising the current value.
While minimising \eqref{Ltrad}, we 
check the active set; when it does change, 
we revert to solving the convexification with the memorised value.
Although this algorithm may seem even cruder than the above,
it performs better still, in practice. 

%Notice that for the sake of the brevity, the presentation is schematic.
% (e.g., 
%We refer to \cite{MarecekTakac2015} for the measure of infeasibility $T(x, \lambda)$, the definition of the low-rank coordinate descent step, etc.
%does not provide details of the Lagrangian functions for ACOPF, as available from ), 
%and assumes that both the convex and non-convex Lagrangian are in the same dimension (i.e., $\x \in \R^m, \lambda \in \R^m$),
%  which need not be the case, generally.
%Still, it demonstrates the key ideas. %we combine the low-rank coordinate descent algorithm of \cite{MarecekTakac2015} for the convex Lagrangian with Newton steps for the non-convex Lagrangian. 
%Notice that the Newton step (Line \ref{line:Newton} in Algorithm \ref{alg:Hybrid}) may employ the same regularisation %as the convex Lagrangian
% as in \cite{MarecekTakac2015}.
%Unfortunately, we do not have a procedure to compute $d_\epsilon$ of \ref{activeset} in practice, yet.
%Instead, we hence employ a back-tracking mechanism.  
%See 

 \begin{algorithm}[tb]
 \begin{algorithmic}[1]
 \STATE Initialise $x \in \R^m, \lambda \in \R^m$, e.g., randomly
 \FOR{$k \leftarrow 0,1,2,\dots$} 
   \STATE Update $(x, \lambda)$, e.g., using \cite{MarecekTakac2015}
   \STATE Compute the set $A_k$ of inequalities satisfied up to $\epsilon$-accuracy 
   \STATE Construct the polynomial Lagrangian function $L'$ corresponding to $A_k$    
   \IF{ $k > K$ \textbf{and} $A_k = A_{k - 1} = \ldots = A_{k - K}$ \textbf{and} $\alpha(\nabla L', x) \le \alpha_0$ } \label{activset-stable-test} 
       \STATE $S \leftarrow (x, \lambda)$
       \FOR{$l \leftarrow 0,1,2,\dots$}  
         \STATE Update $(x, \lambda)$ using Newton step on $\nabla L = 0$, i.e., minimising \eqref{Ltrad} \label{line:Newton2}
         \STATE Compute the set $A'_l$ of inequalities satisfied up to $\epsilon$-accuracy 
         \IF{$A'_l \not = Ak$}  
           \STATE $(x, \lambda) \leftarrow S$
           \STATE \textbf{break} \label{rejection}
         \ENDIF
         \IF{ Infesibility of $x$ is below $\epsilon$}  
           \STATE Optionally, test sufficient conditions for global optimality, e.g., \cite{6655990}
           \STATE \textbf{break} \label{termination}
         \ENDIF
       \ENDFOR
     \IF{Infesibility of $x$ is below $\epsilon$}
       \STATE \textbf{break}
     \ENDIF
   \ENDIF
  \ENDFOR
 \end{algorithmic}
 \caption{A schema of the hybrid method} % Parallel
 \label{alg:HybridActiveset}
\end{algorithm} 

\section{The Practice}
\label{sec:practice}

%which switches from a convexfication to the Newton method
%on a non-convex problem, whenever we detect it is safe to, 

%\subsection{The Implementation}

 \begin{figure*}[t]
 \center
\includegraphics[scale=0.2]{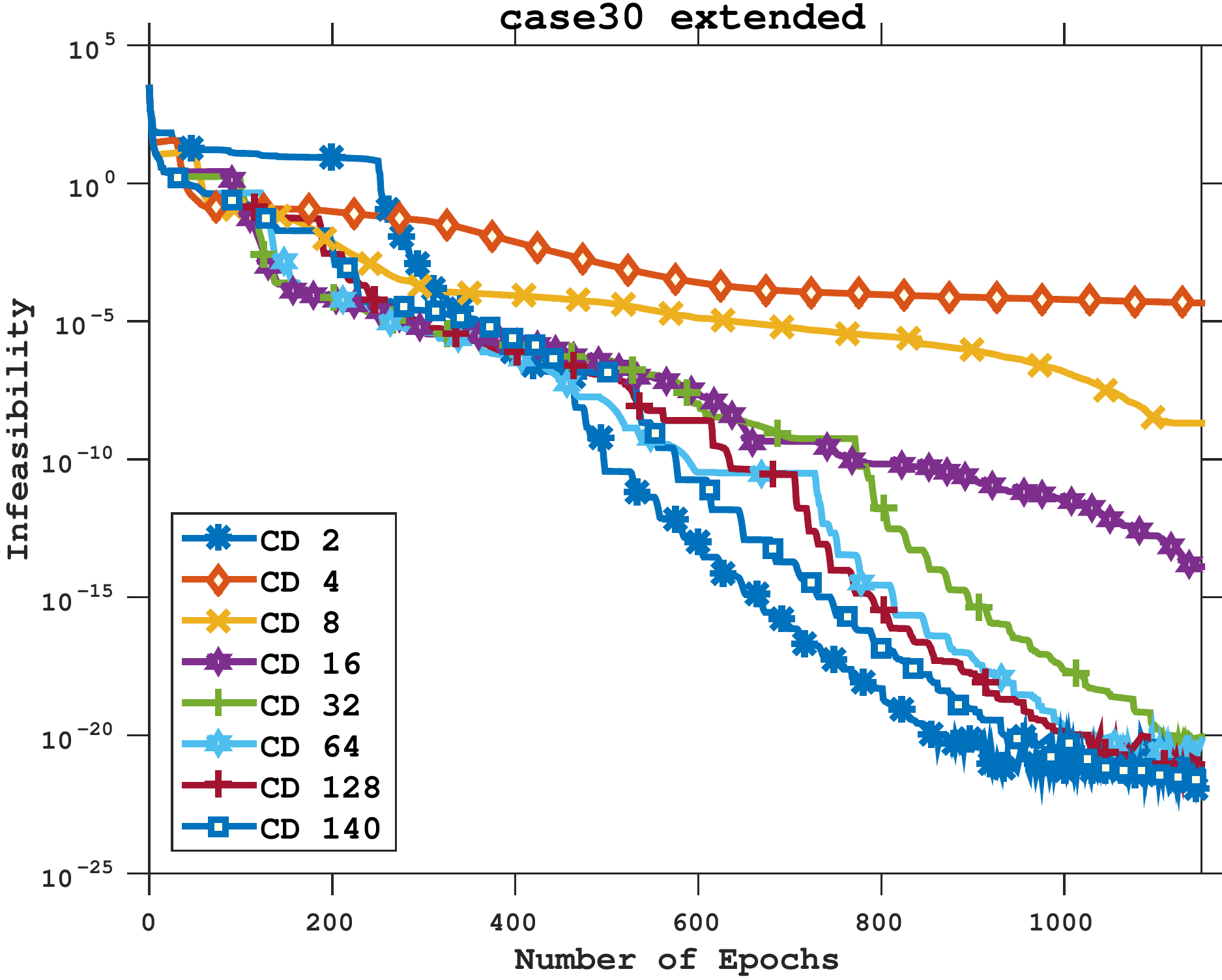}
\includegraphics[scale=0.2]{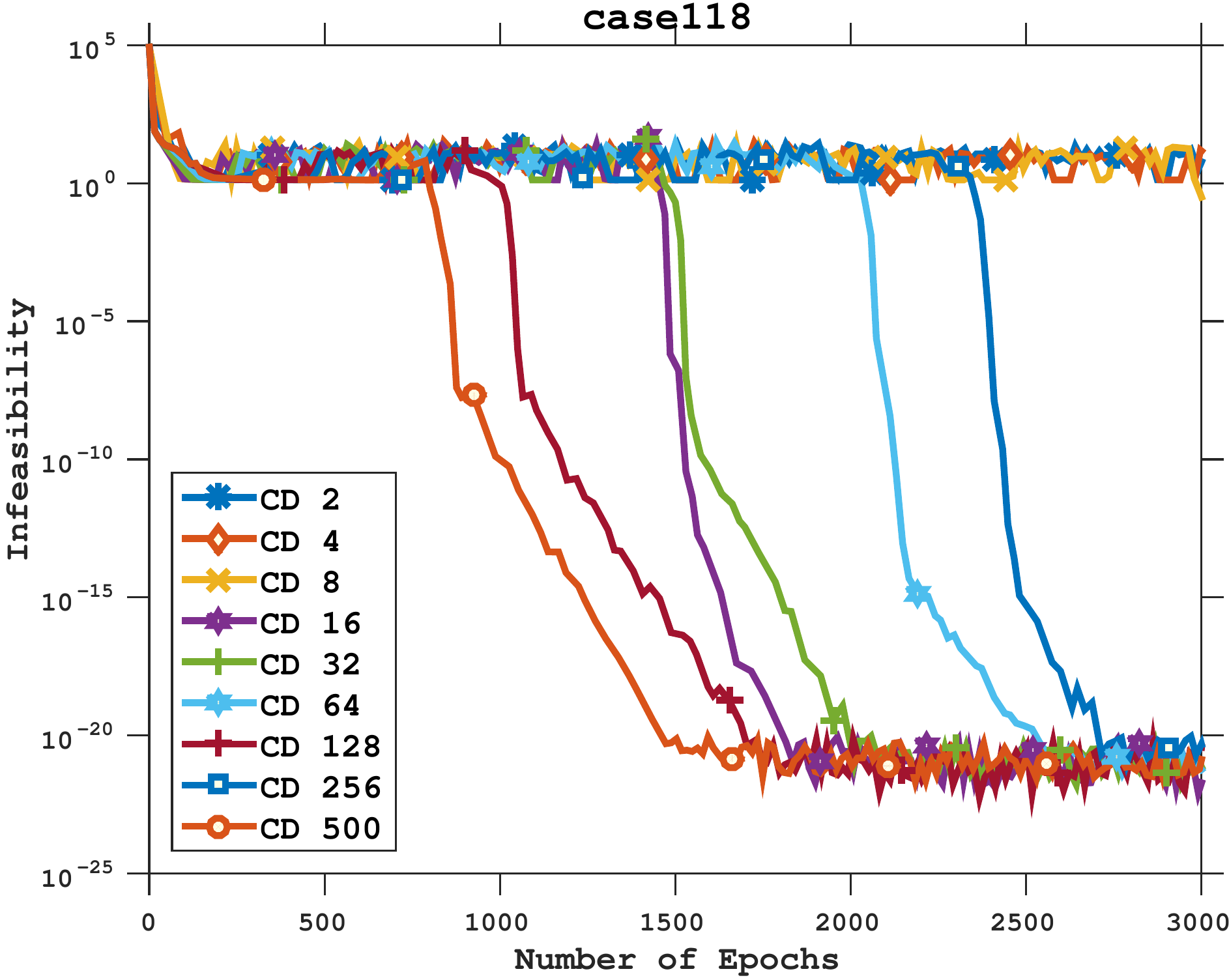}
\includegraphics[scale=0.2]{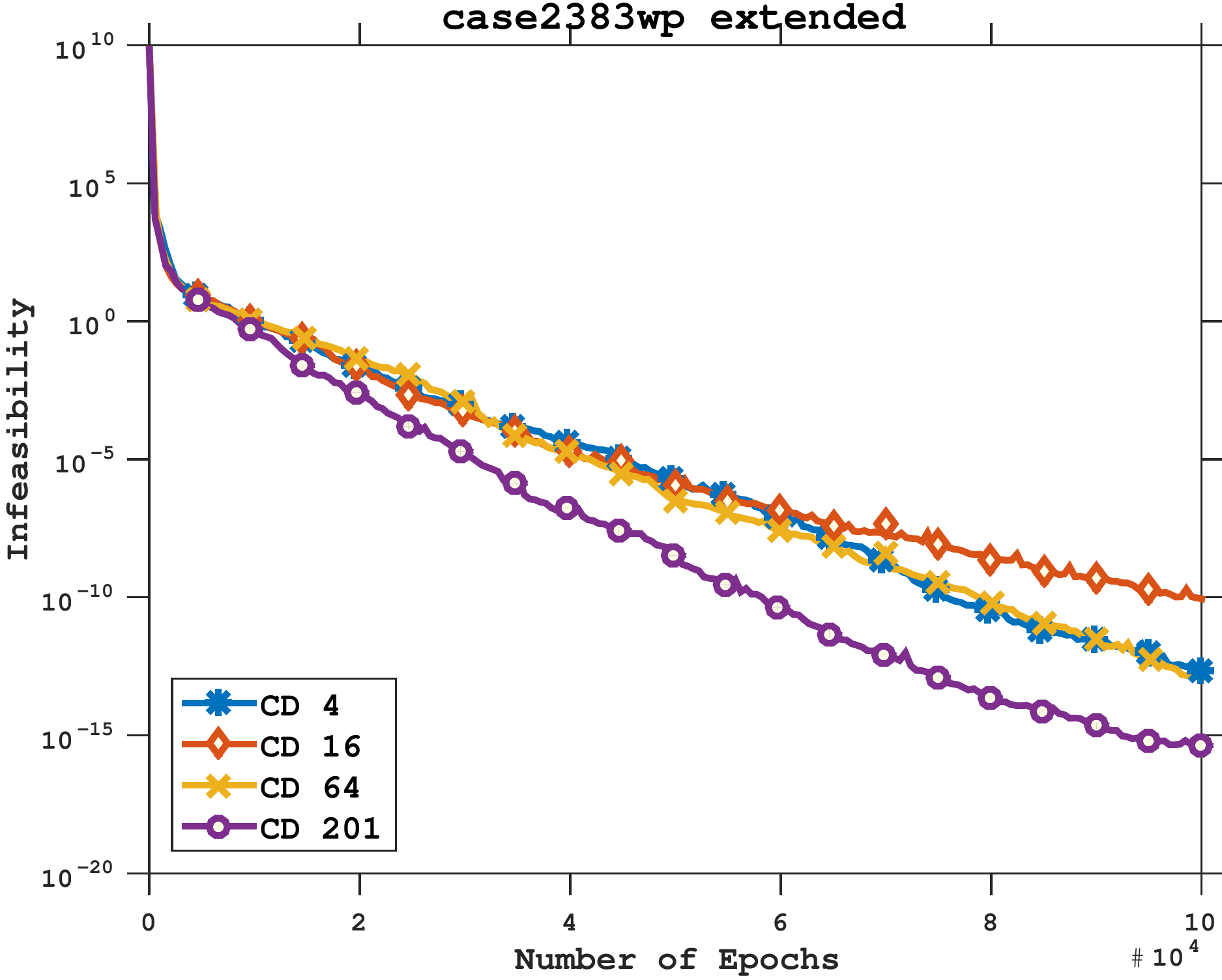}
\\ $ $\\
\includegraphics[scale=0.2]{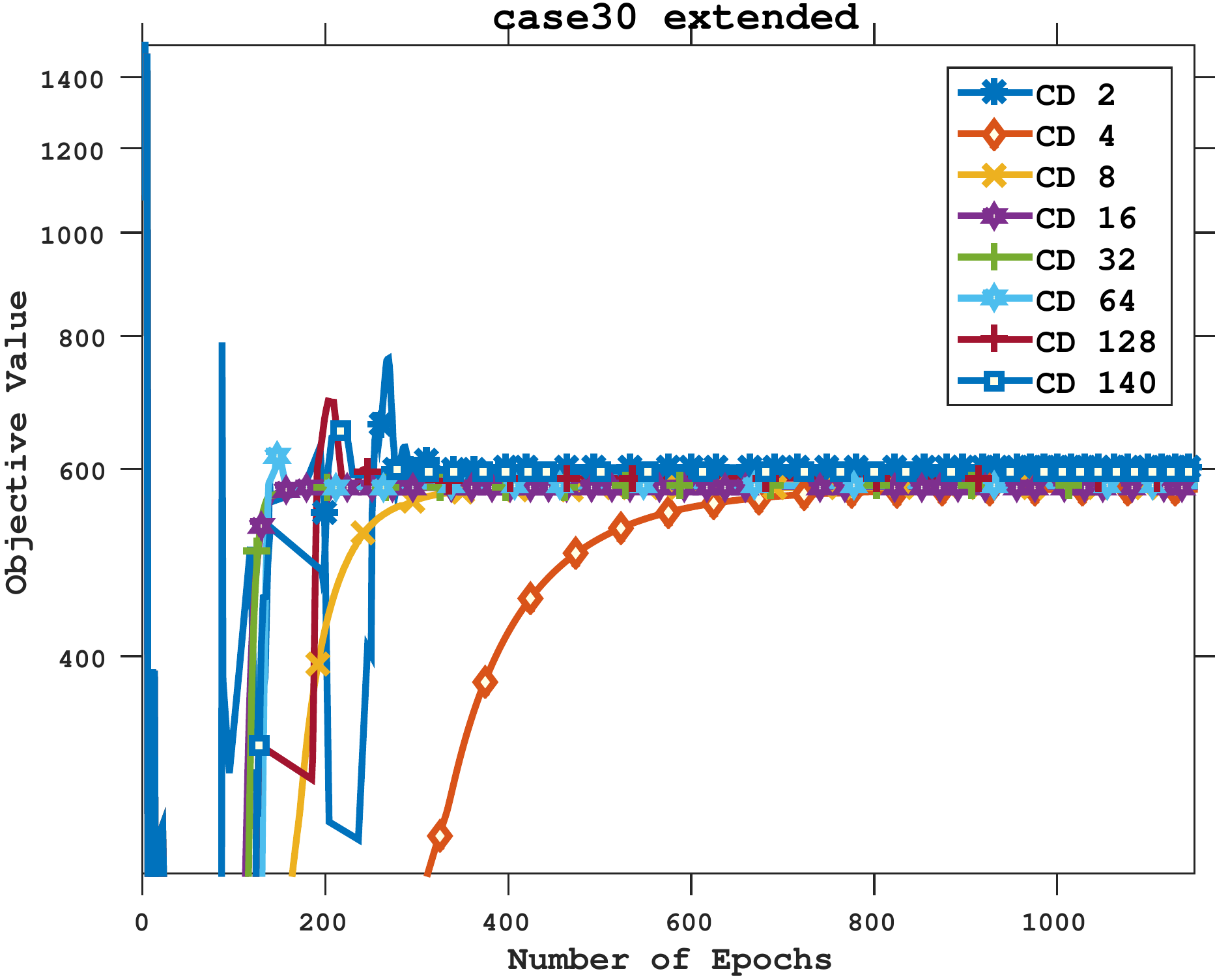}
\includegraphics[scale=0.2]{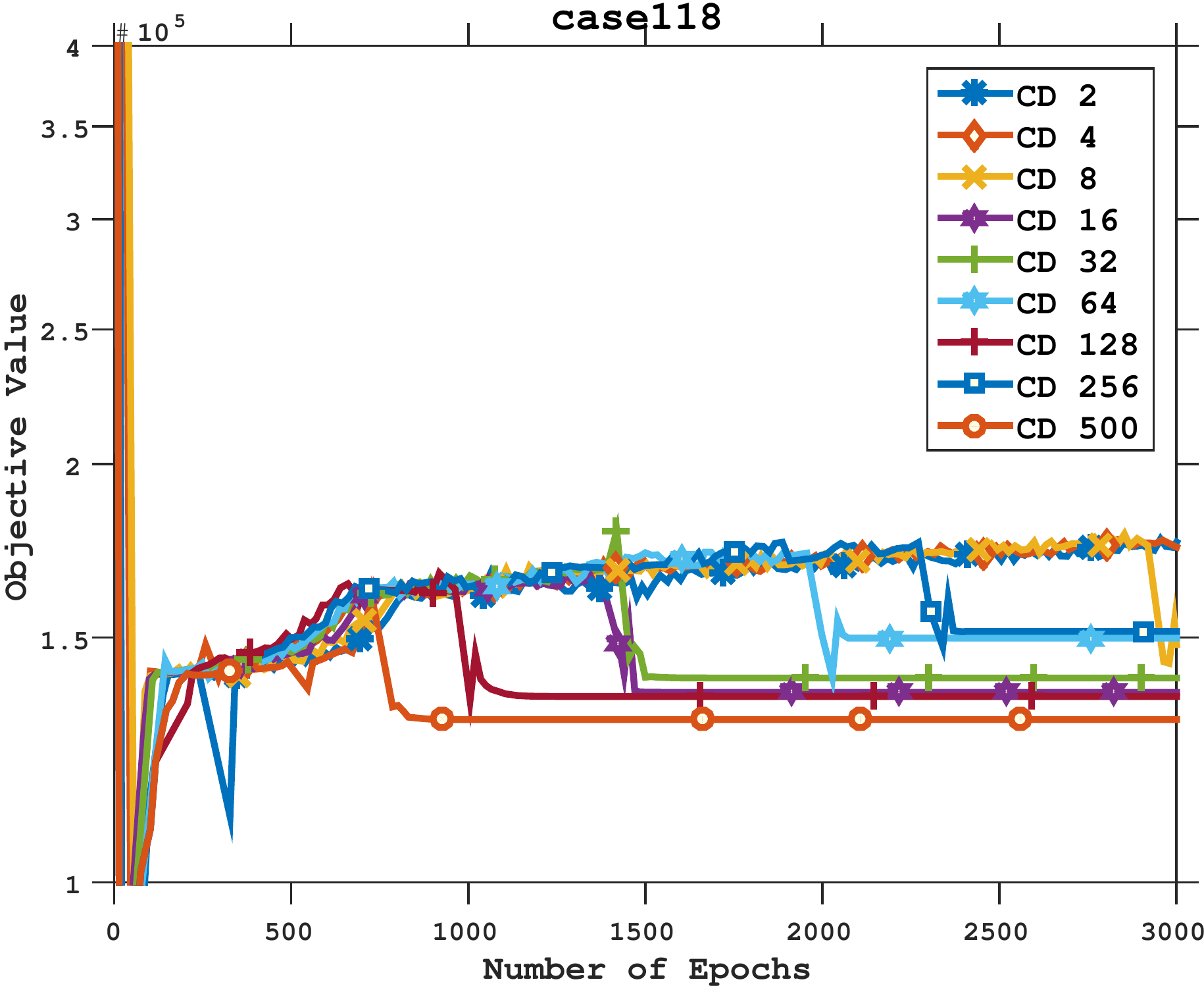}
\includegraphics[scale=0.2]{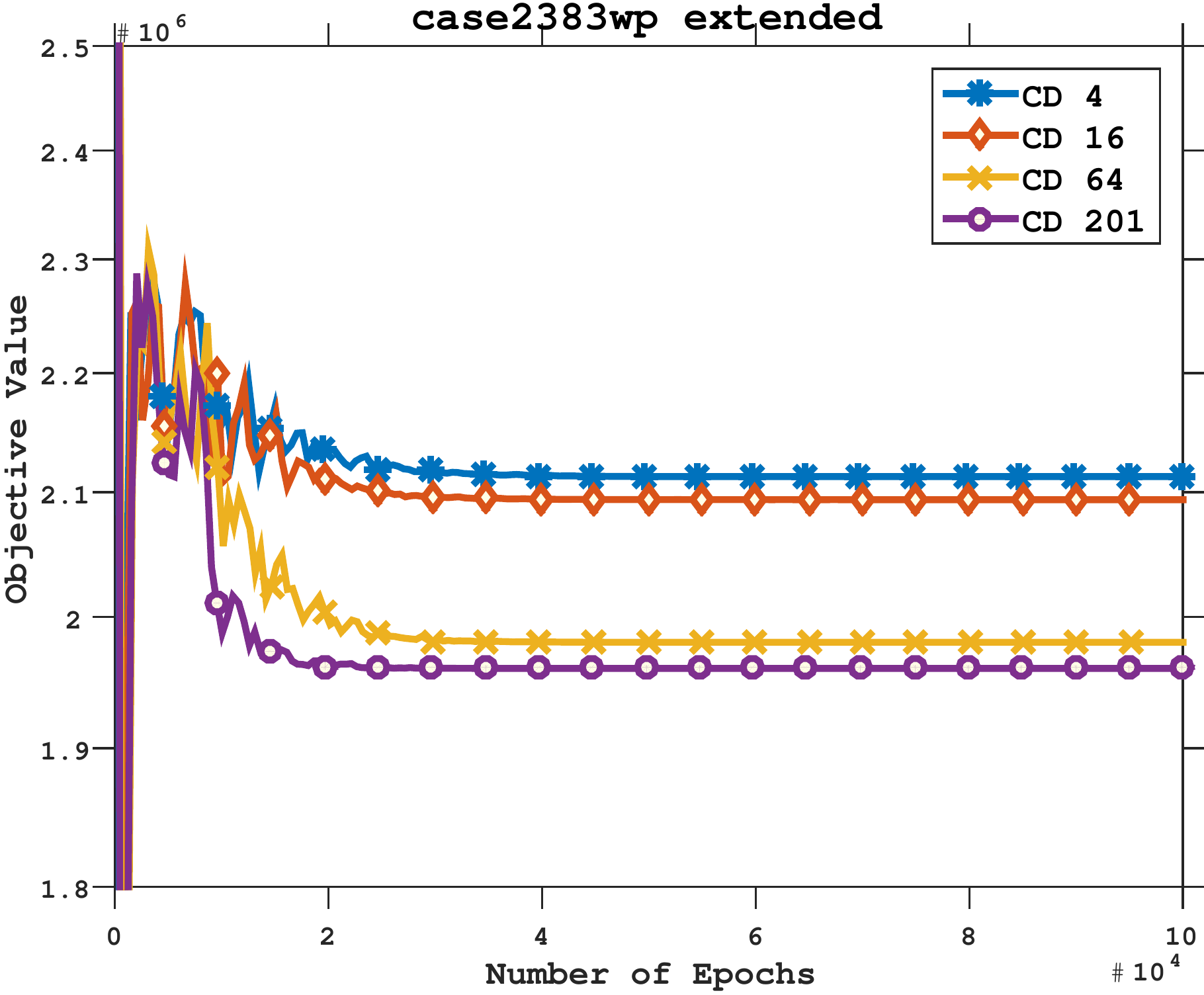}
 \caption{The motivation: the evolution of infeasibility (top row) and objective function (bottom row) when one switches from solving the convexification to the Newton method after a given number of steps on IEEE 30-bus test system (left), 118-bus test system (middle), and a snapshot of the Polish system (case2383wp; right).}
 \label{fig:motivation}
\end{figure*}

In implementing a hybrid method for ACOPF, such as 
Algorithm \ref{alg:HybridActiveset},
one encounters a number of challenges.
One requires a solver for the convexification of ACOPF,
a well-performing implementation of the Newton method for the non-convex Lagrangian $L'$, 
and an implementation of Proposition \ref{mubound}.
We will comment upon these in turn. 
 
The convexification we use is based on of the Lagrangian of the relaxation of Lavaei and Low \cite{lavaei2012zero}.
(As we have shown in \cite{Ghaddar2015},  relaxation of Lavaei and Low is the first level of the hierarchy Lasserre \cite{Lasserre2006}, considered in Lemma \ref{convexlagrangian}.)
In particular, we have used a variant introduced in \cite{MarecekTakac2015}.
To solve it, we have used a problem-specific first-order method \cite{MarecekTakac2015},
which is based on a coordinate descent with a closed-form step. 

The variant of Newton method, which we use, has been implemented specifically for this paper.
Conceptually, it follows the outline of the first-order method \cite{MarecekTakac2015},
but uses the symbolic Hessian in computation of the step.
%For each Newton step, back-tracking line search is used to ensure sufficient decrease. 
%\paragraph{Negative curvature.}
Note that on the non-convex Lagrangian, Newton  direction may turn out not to be a direction of descent.
In dealing with the negative curvature,
 we multiply the direction by -1, %\cite{nesterov1994interior},
 although we could also use a damped variant of Newton method. % \cite{nesterov1994interior}.
%We are experimenting with both.
Multiple Newton steps, each satisfying sufficient decrease, are performed in each iteration of the loop, before a sufficient decrease in the convex Lagrangian is tested. 
 
A key contribution of ours is an implementation of Proposition \ref{mubound} specific to ACOPF.
There, one should observe that $\beta$ is easy to obtain as $\beta(x,L) \coloneq \|d\|_2 = \|L_p\|_2$, where $L_p$ is the Newton direction, and $f = \frac{\partial L}{\partial x}$. By observing  $\frac{\partial L}{\partial x}$, we can use $d_i=3\ \forall i$, thus $D=3$ and 
$\Delta_{(d)}(x) = 3^{1/2}\|x\|_1^2 I_{2n\times 2n}$, where $I_{2n\times 2n}$ is a $2n\times 2n$ identity matrix, so
$$\mu(L, x) = \max \{1, \sqrt{3}\|x\|_1^2\cdot\|\nabla L\|\cdot \|[\nabla^2 L (x)]^{-1}\|\},$$
where the spectral norm $\|[\nabla^2 L (x)]^{-1}\|$ can be computed as the inverse of the smallest eigenvalue of $\nabla^2 L (x)$.
A trivial implementation may run for days even on modest instances. 
In our implementation, we used about 2000 lines of algebraic manipulations in Python to generate considerable amounts of
instance-specific, optimised C code employing Intel MKL Libraries. 
For example, the test for case2383wp involves about 30 MB of C code.
This makes it possible to run the test within seconds even on case2383wp.

%\section{Computational Experiments}

 \begin{figure*}[t]
 \center
\includegraphics[scale=0.2]{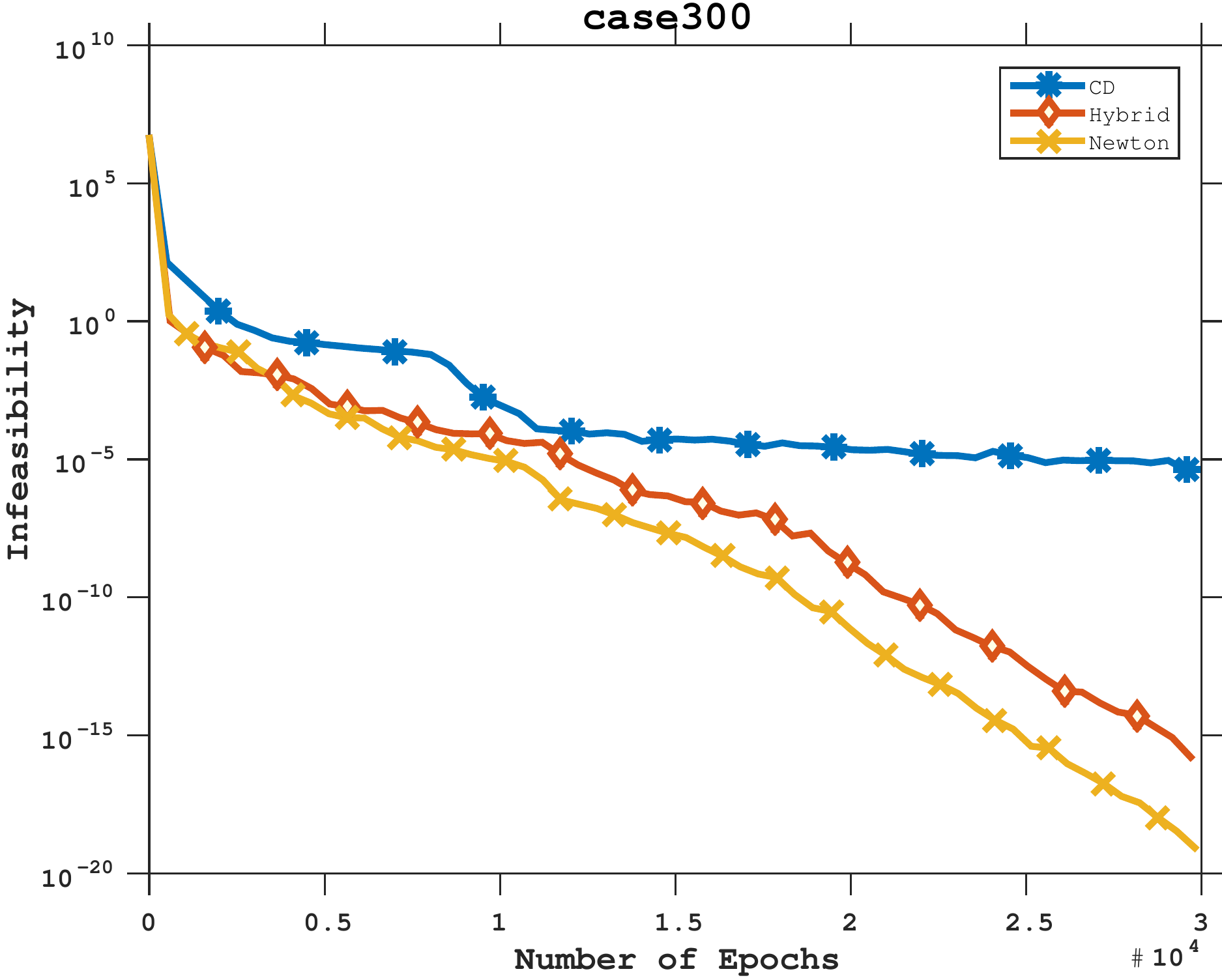}
\includegraphics[scale=0.2]{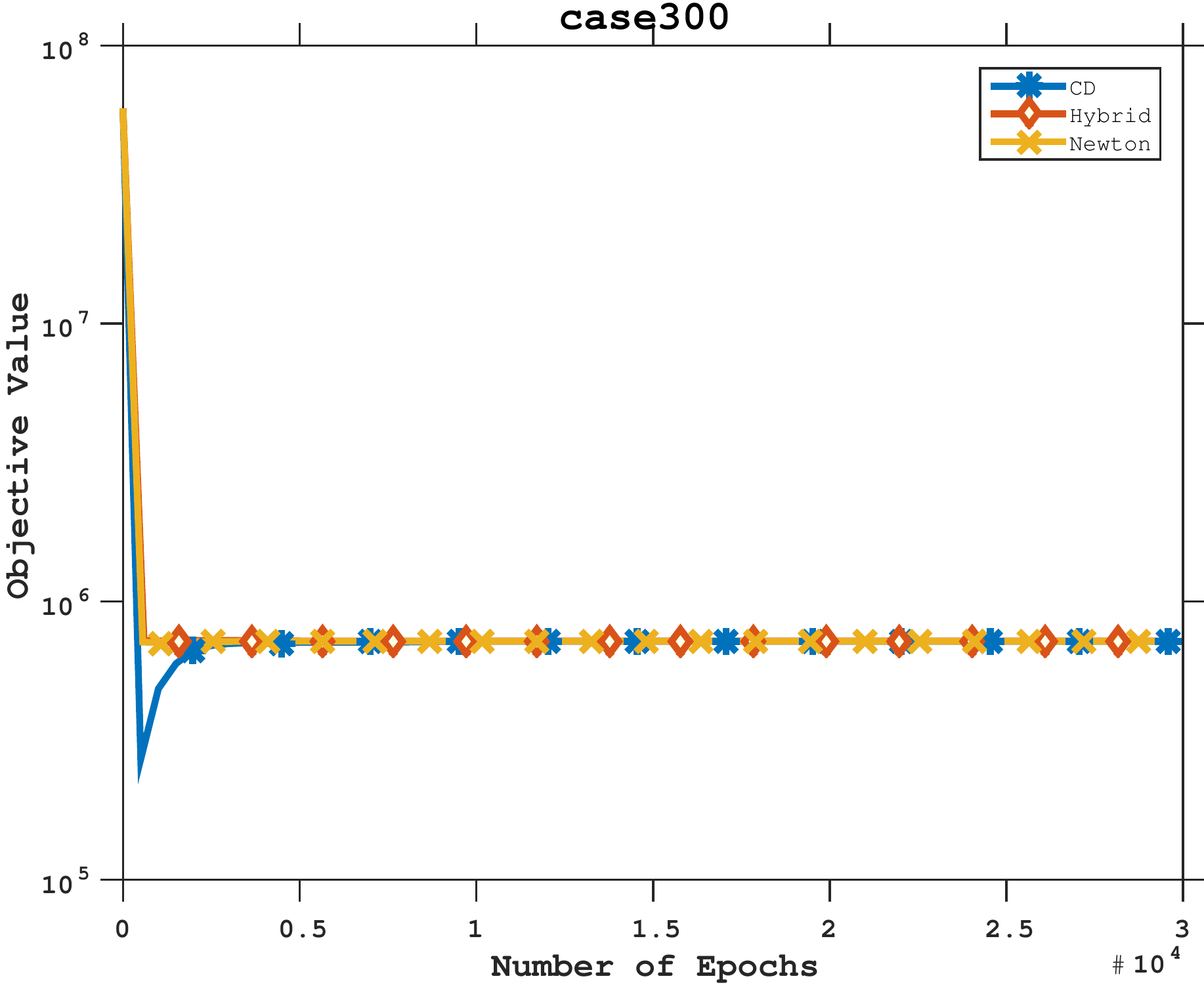}
\includegraphics[scale=0.2]{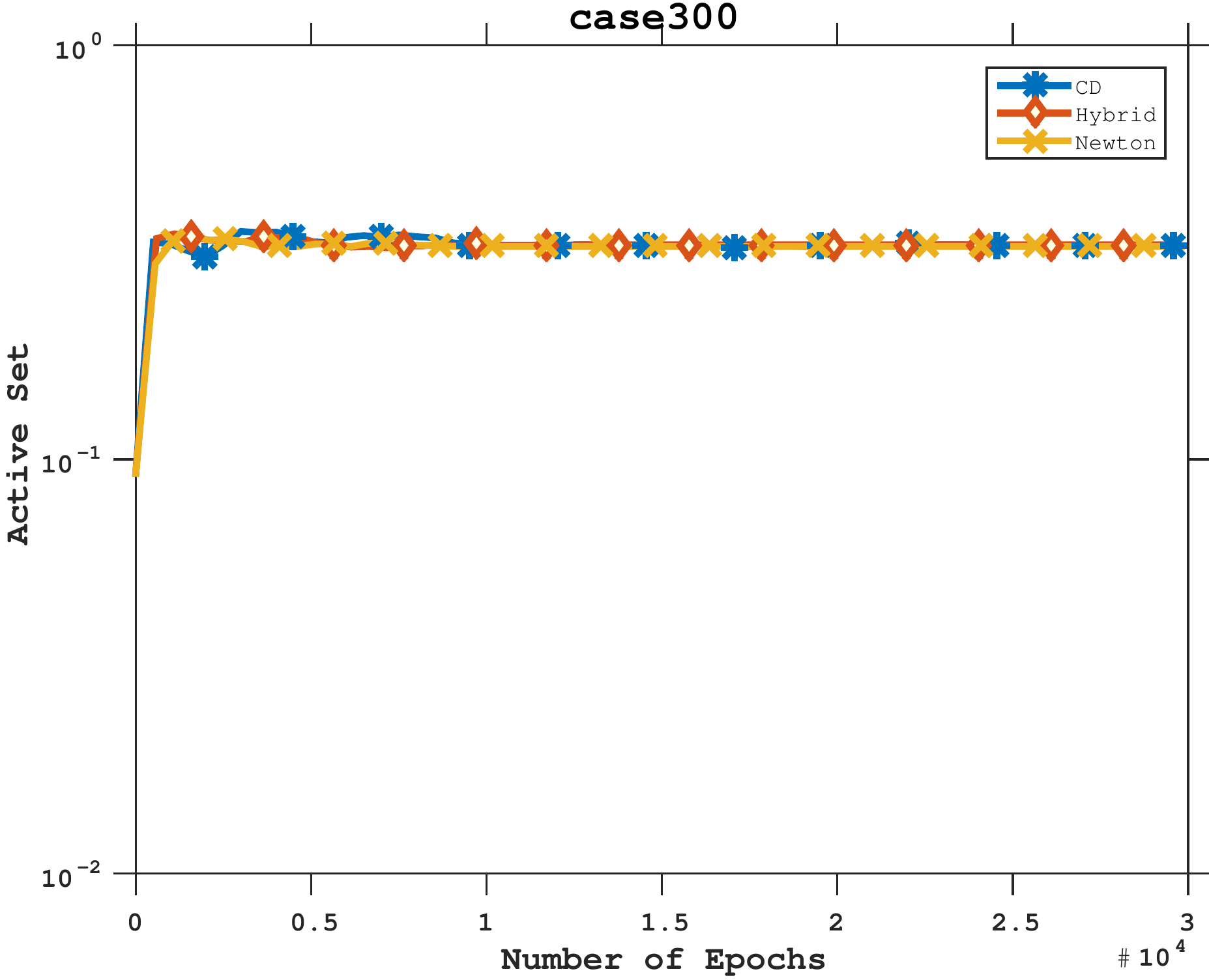}
\\ $ $\\
\includegraphics[scale=0.2]{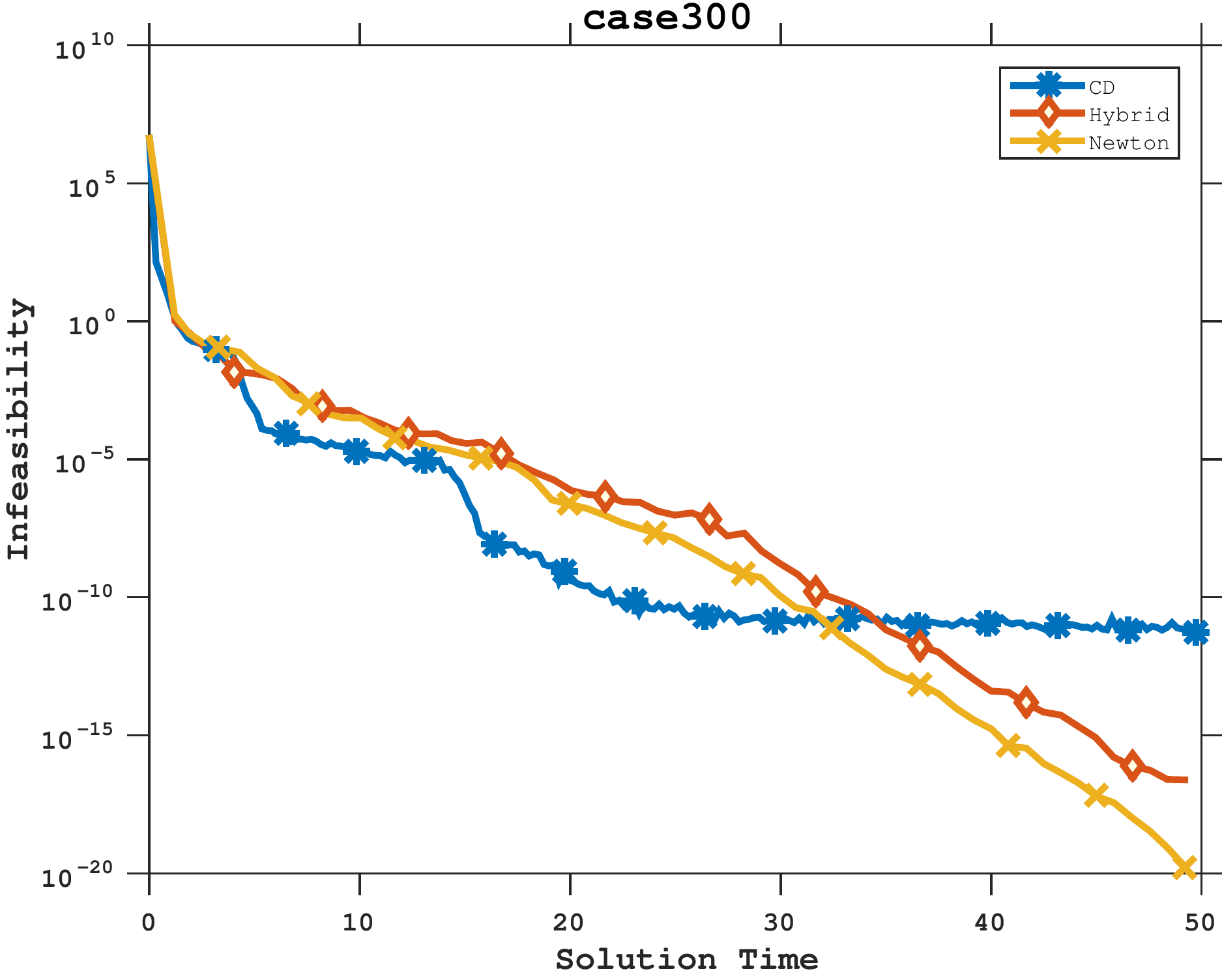}
\includegraphics[scale=0.2]{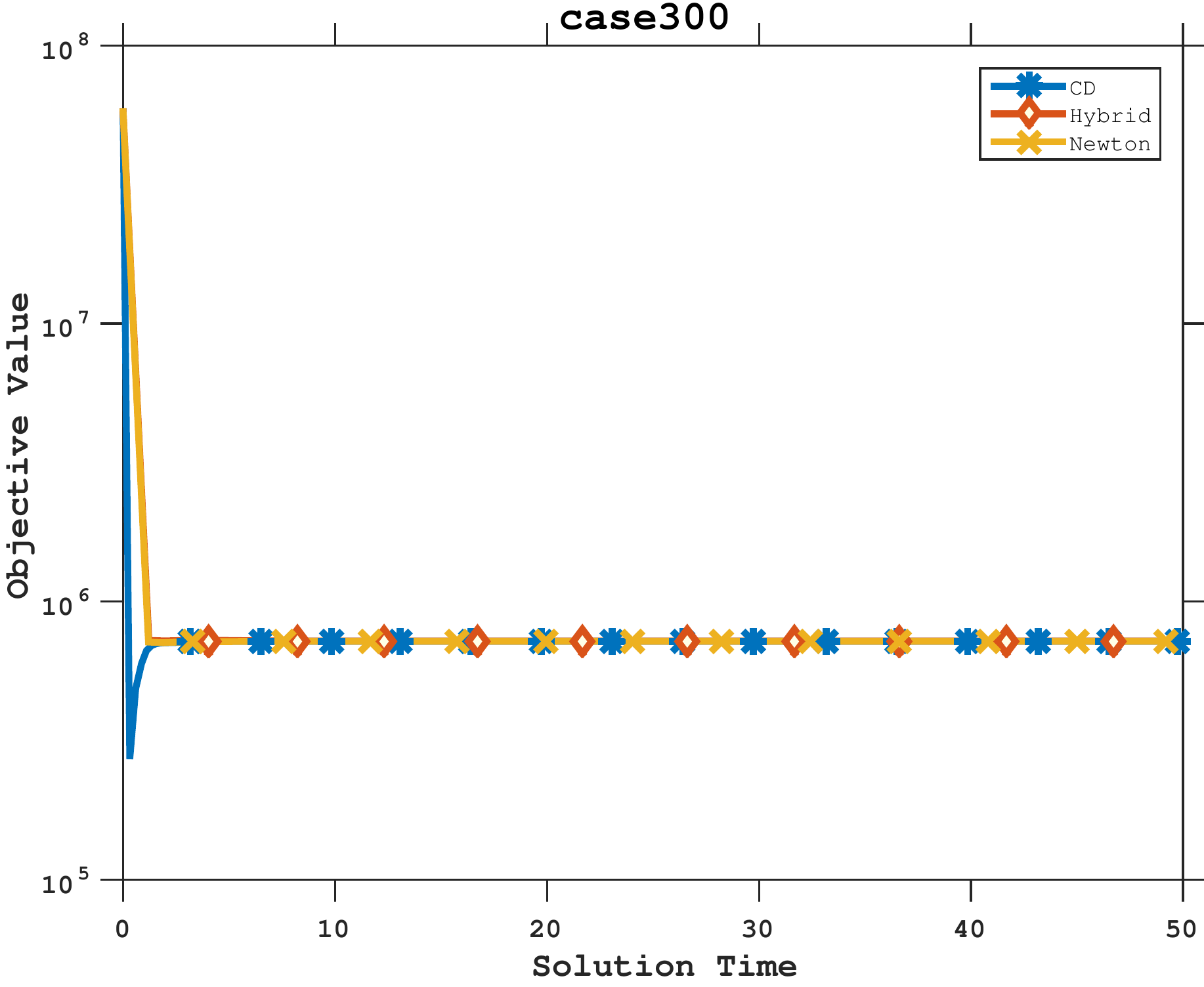}
\includegraphics[scale=0.2]{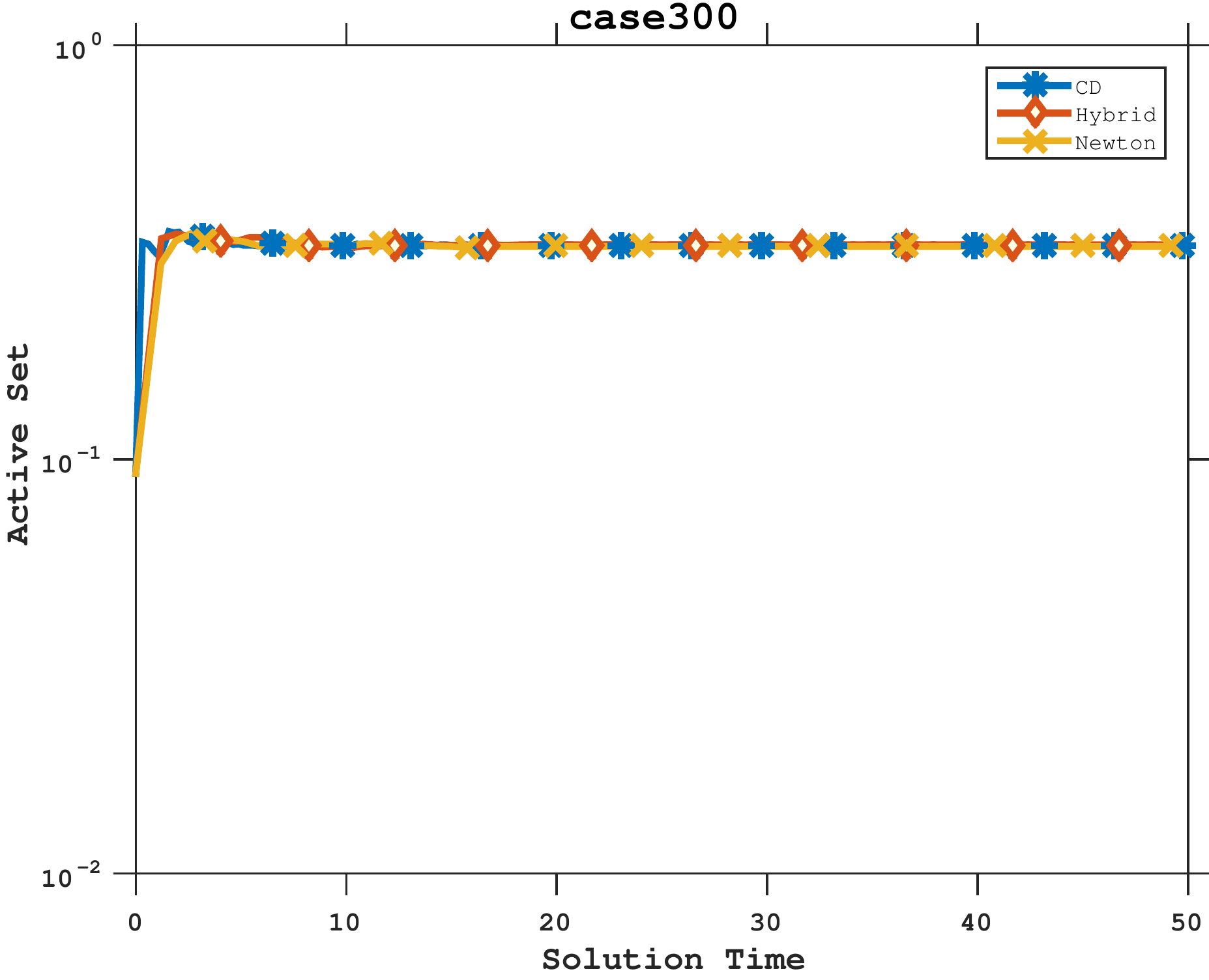}
 \caption{The performance of the hybrid method on the IEEE 300-bus test system (case300).}
 \label{fig:case300}
\end{figure*}

\begin{figure*}[t]
\includegraphics[scale=0.2]{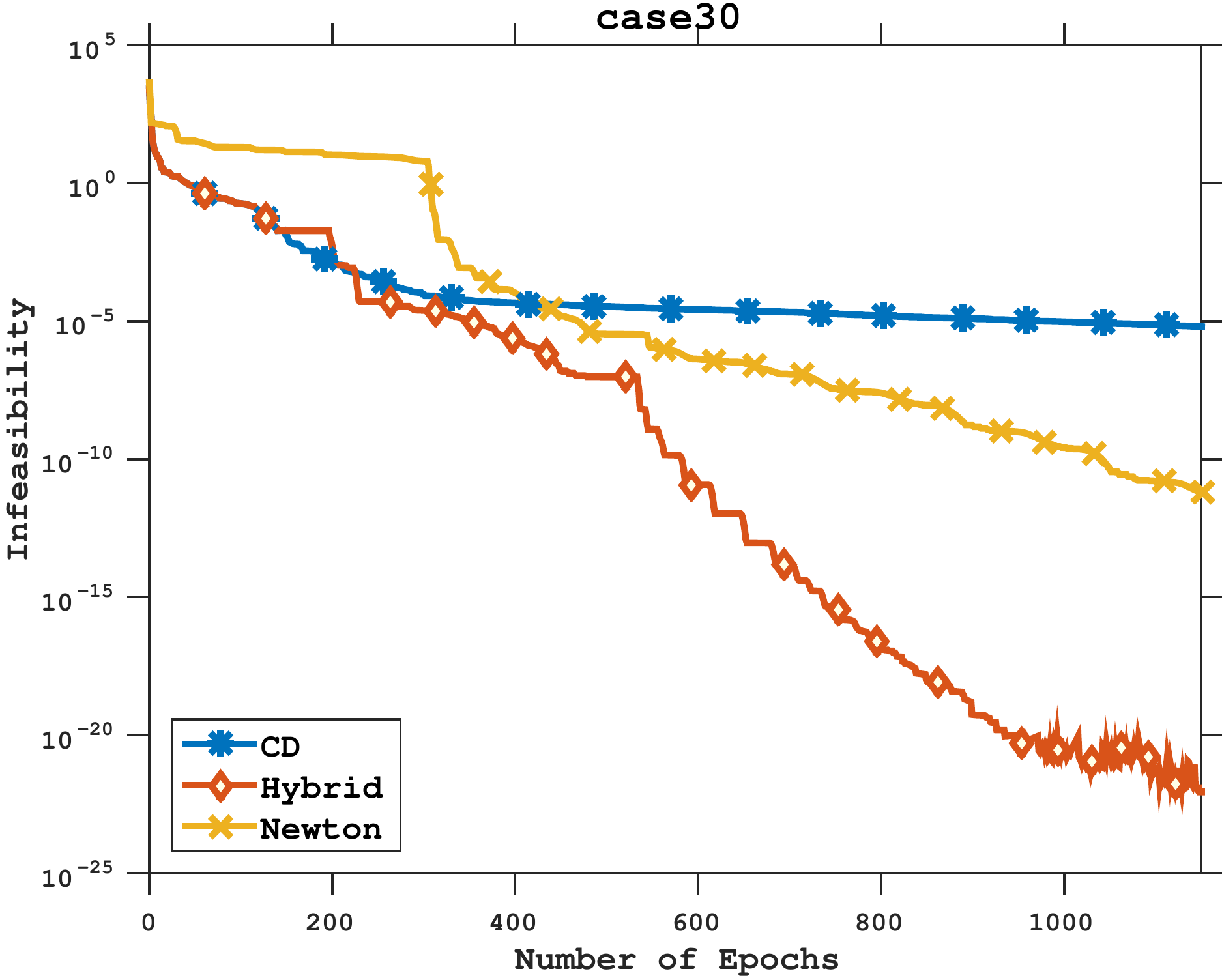}
\includegraphics[scale=0.2]{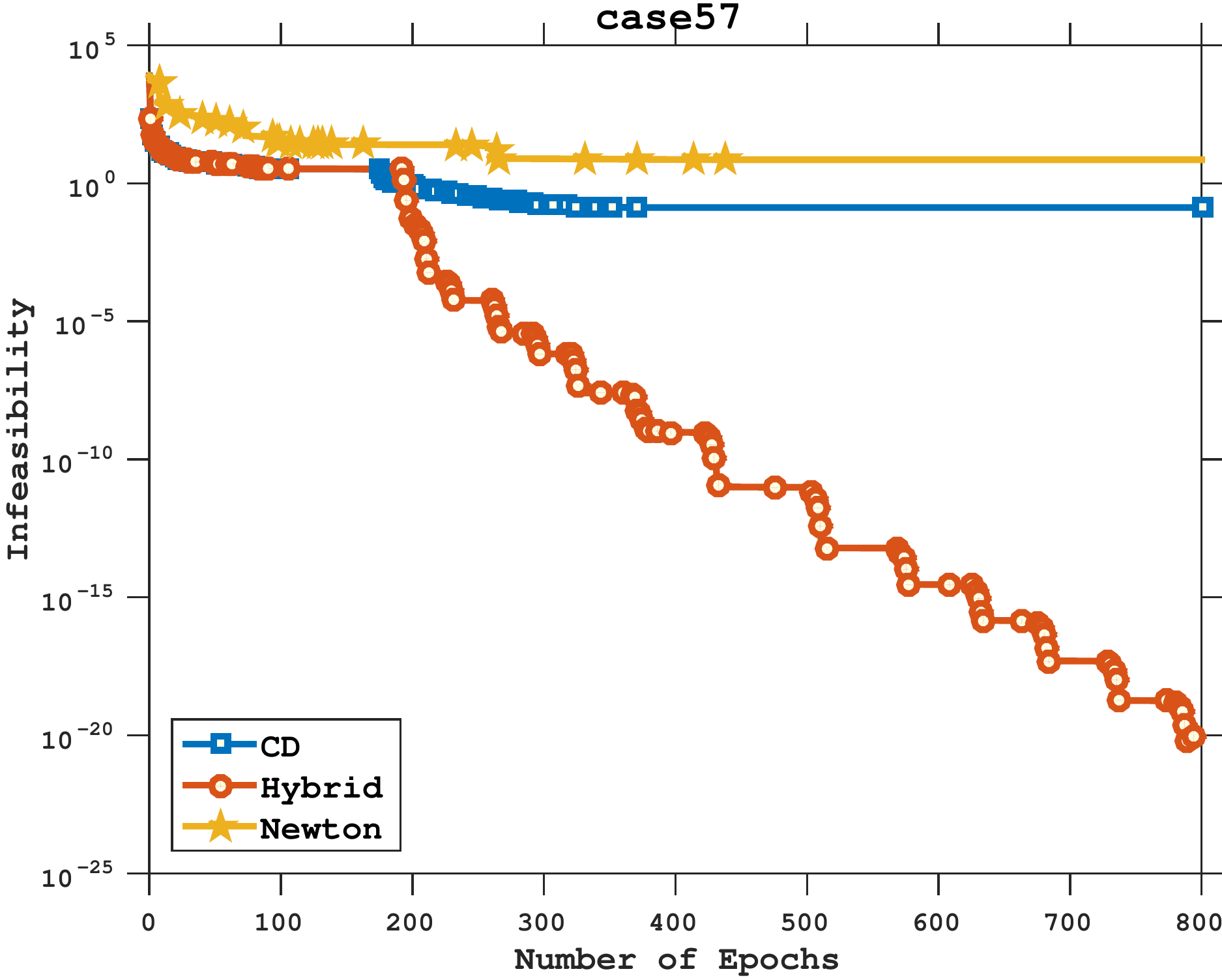}
\includegraphics[scale=0.2]{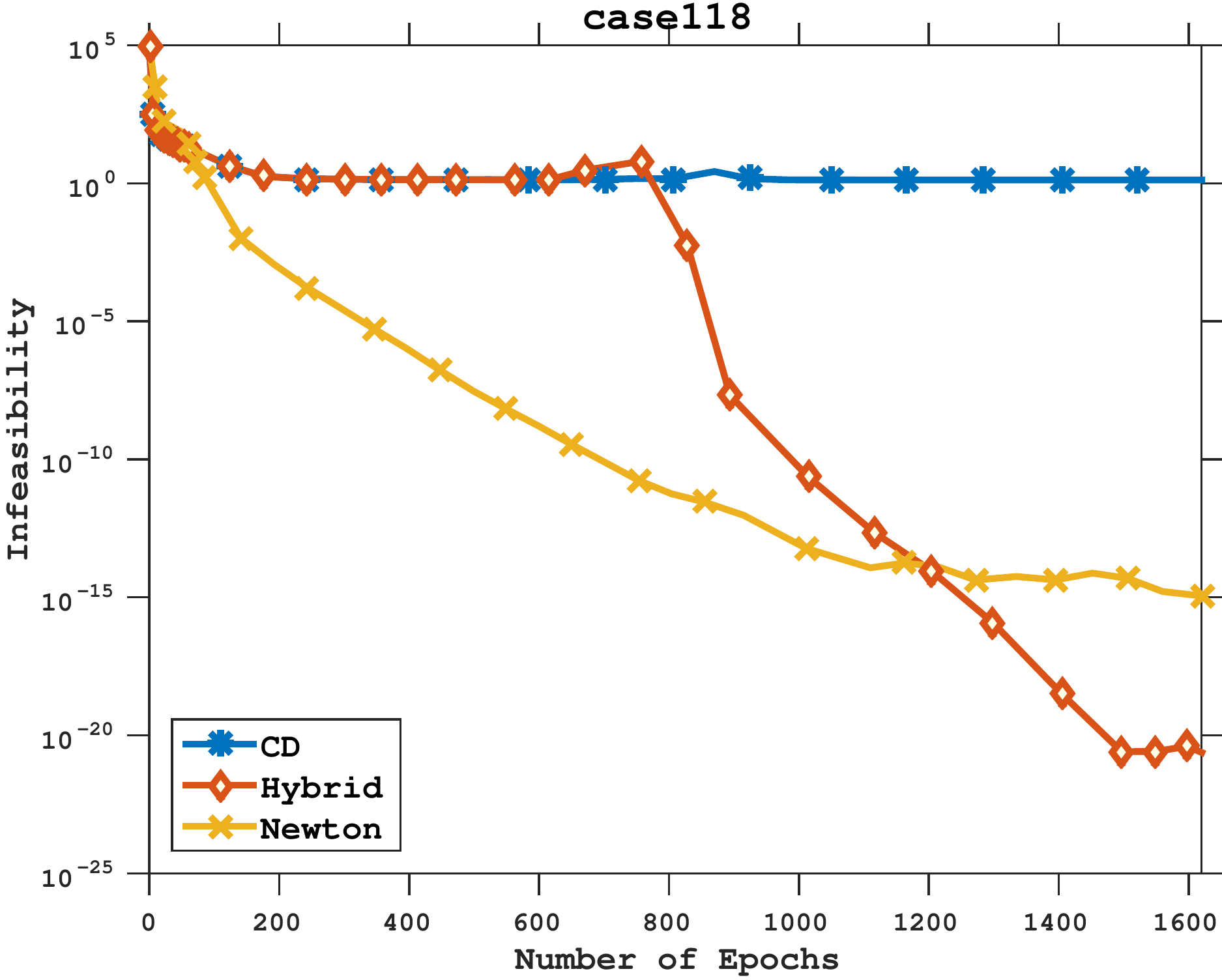}\\
\includegraphics[scale=0.2]{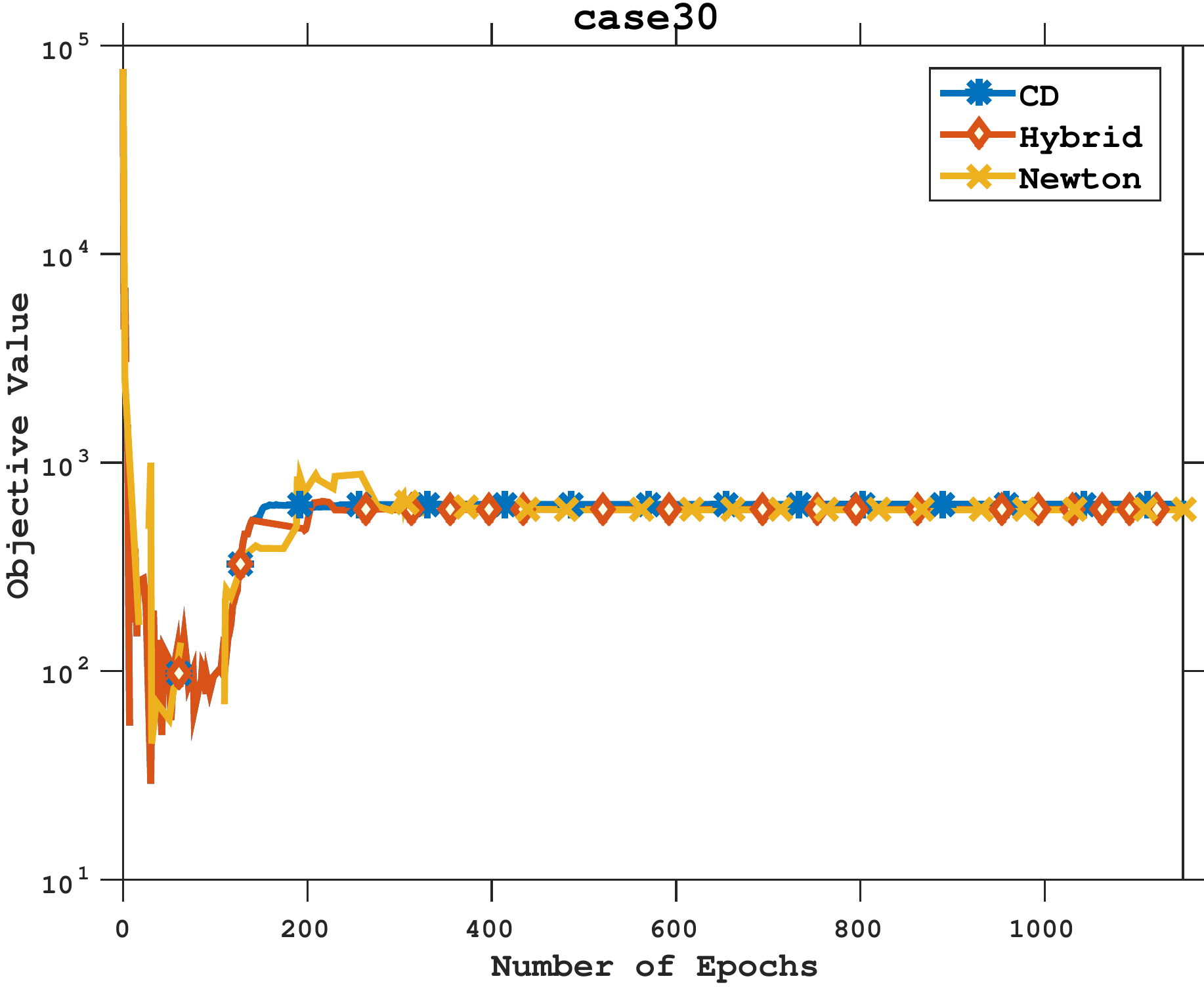}
\includegraphics[scale=0.2]{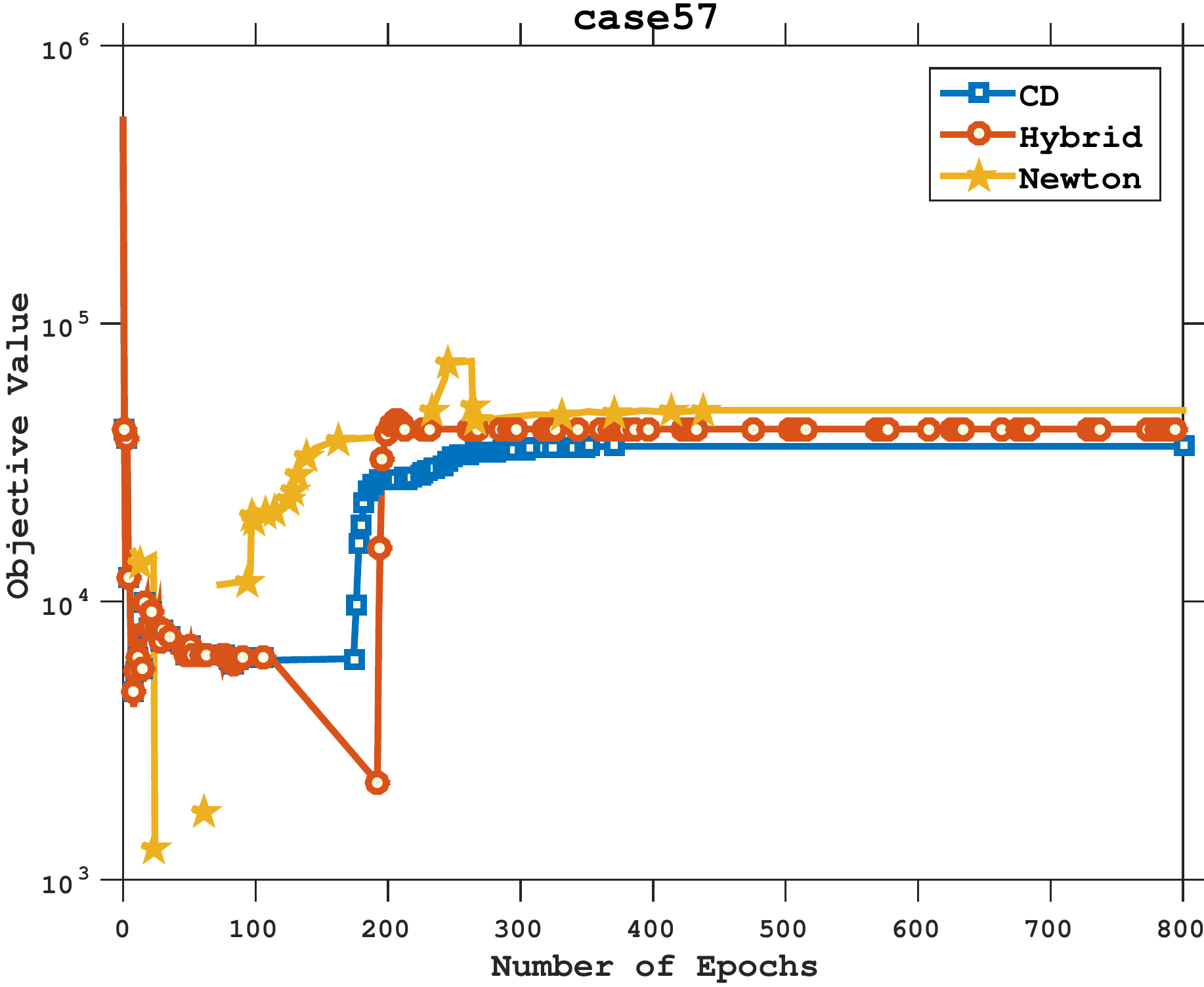}
\includegraphics[scale=0.2]{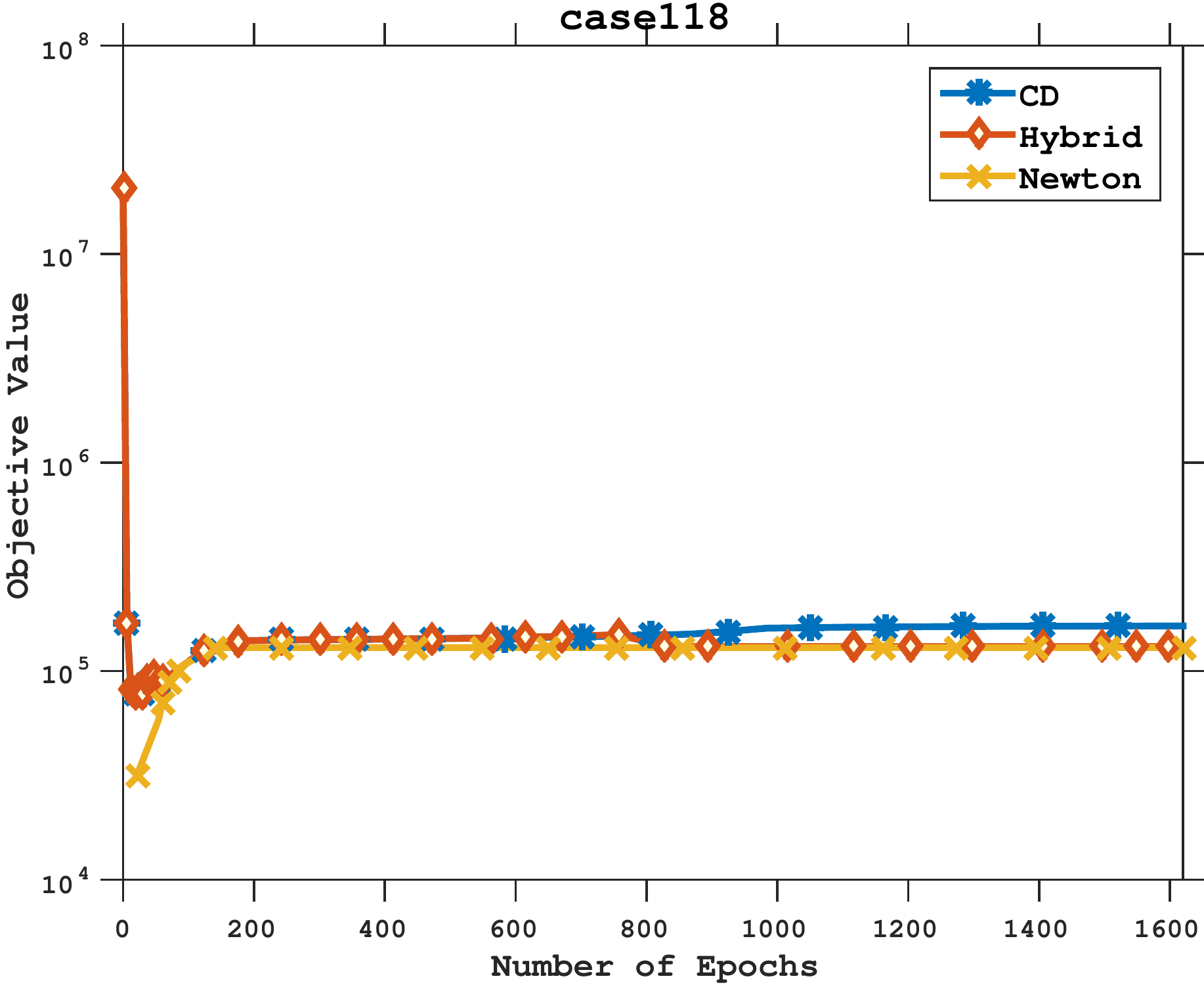}\\
 \caption{The performance of the hybrid method on three IEEE test systems: 30-bus, 57-bus, and 118-bus.}
 \label{fig:additional}
\end{figure*}

\begin{figure*}[t]
\includegraphics[scale=0.2]{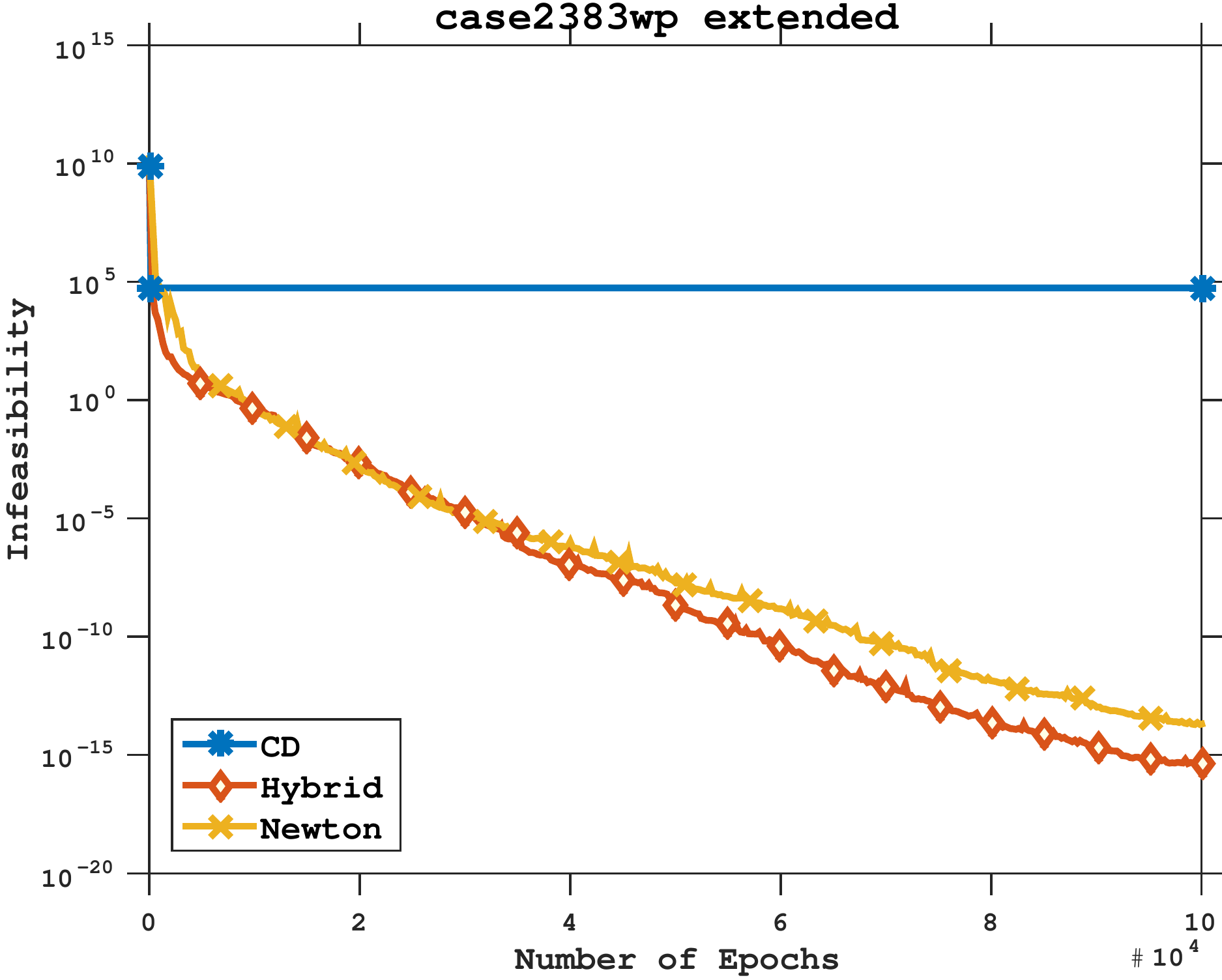}
\includegraphics[scale=0.2]{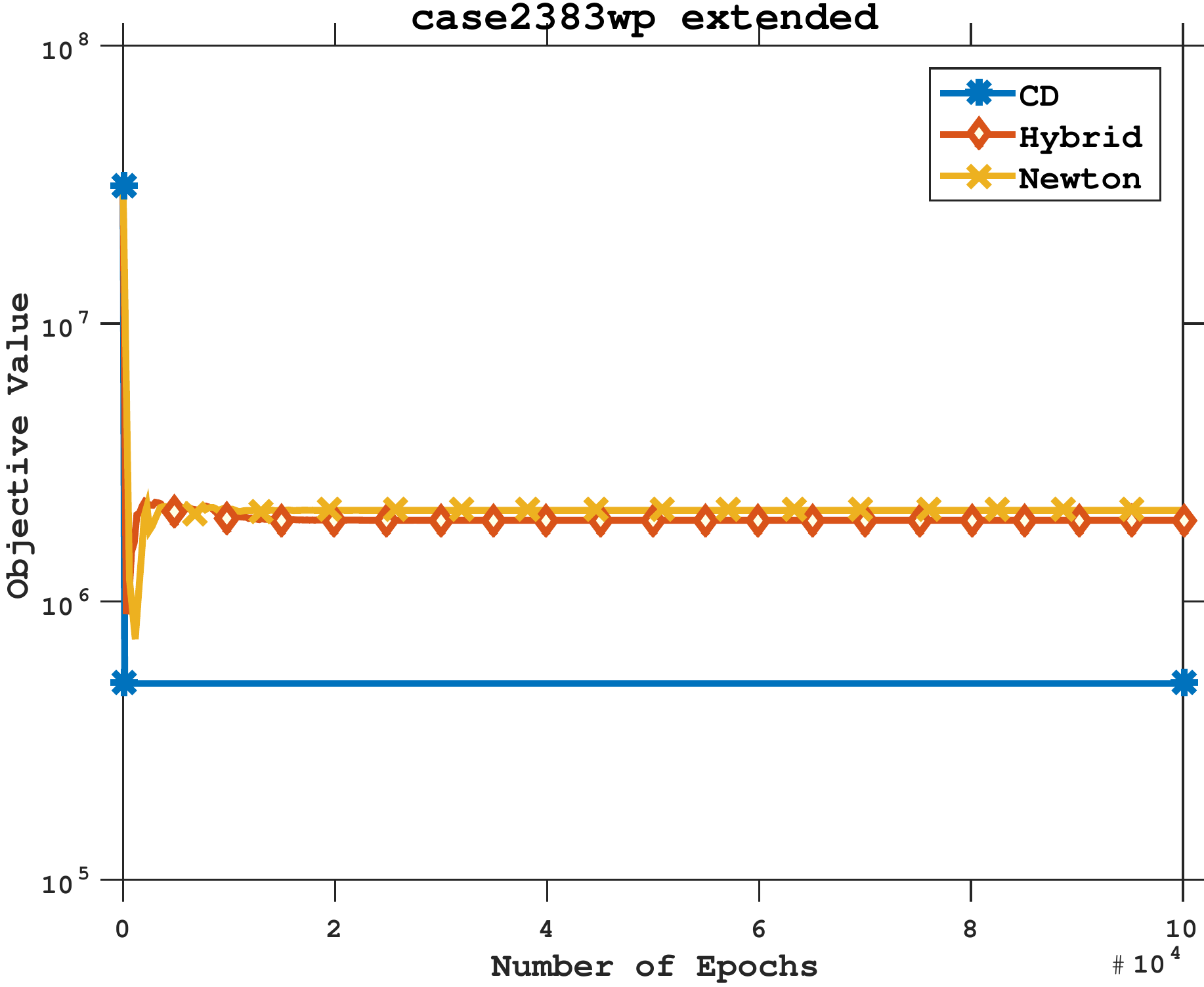}
\includegraphics[scale=0.2]{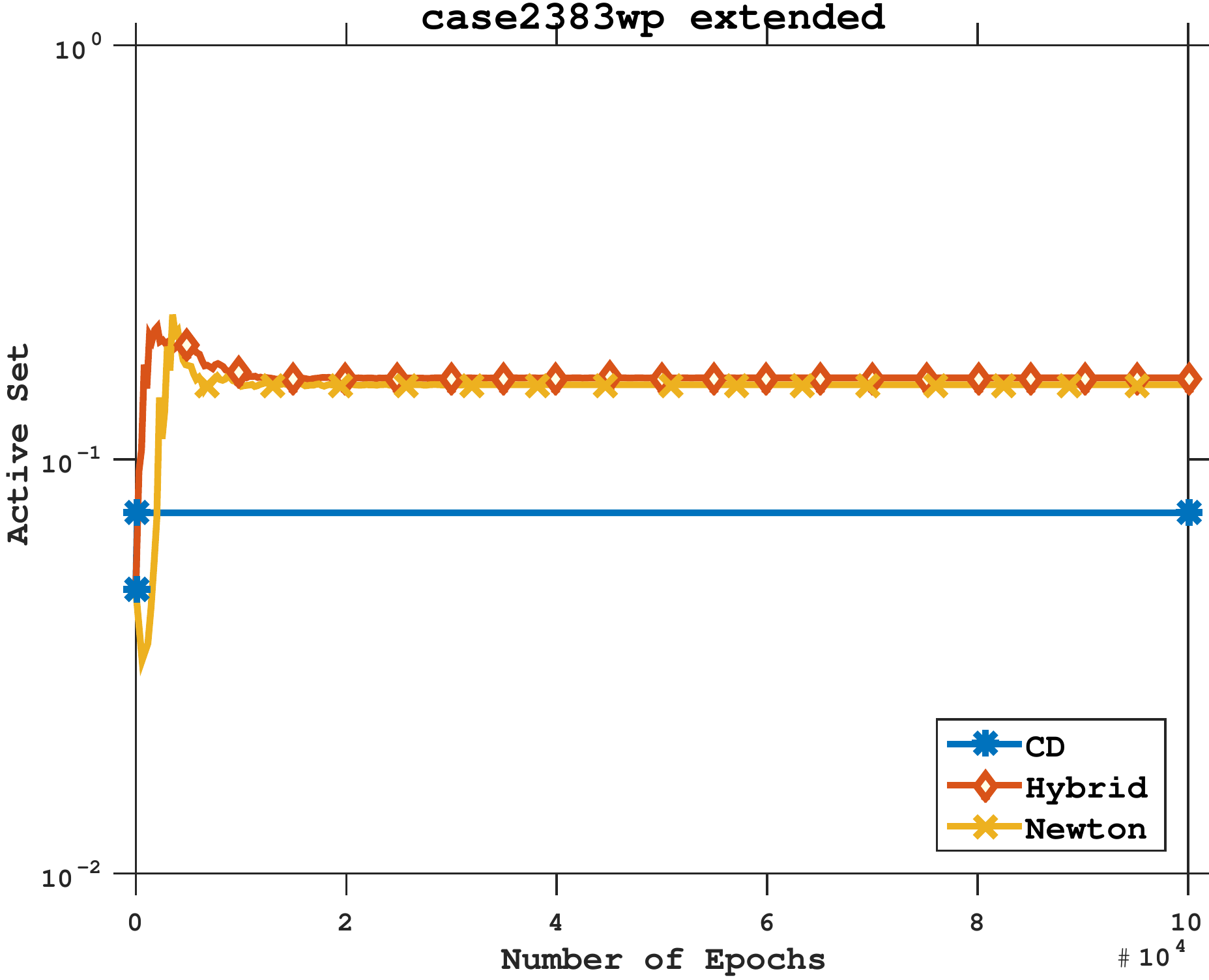}
%\\ $ $\\
%\includegraphics[scale=0.2]{newimg/case2383wpInfeasTime1}
%\includegraphics[scale=0.2]{newimg/case2383wpObjTime1}
%\includegraphics[scale=0.2]{newimg/case2383wpActsetTime1}
 \caption{The performance of the hybrid method on a snapshot of the Polish transmission system (case2383wp).}
 \label{fig:case2383wp} 
\end{figure*}

 Some evidence of the need for such a test %as to when it is safe to switch over to the Newton method,
 is provided in Figure \ref{fig:motivation}.
 There, the plot the evolution of infeasibility (top row) and objective function (bottom row) over the number of epochs, 
 while varying the number of epochs after we switch from solving the convexification 
 to the Newton method. 
 Even after a number of iterations, each of which decreases the value of the Lagrangian, 
 the Newton method can diverge.
 See, for example, the series denoted CD 2 in the middle plot in the top row, where 
 one switches-over after 2 epochs of the coordinate descent on the convexification for the IEEE 118-bus test system, 
 but where the infeasibility does not seem to fall below 1, ever.
 In the middle plot in the bottom row, one may also observe the variety of local optima reached,
 by switching after 32 epochs (CD 32), 128 epochs (CD 128), and 500 epochs (CD 500) on 
 the same instance.
 %For additional plots, we refer to Appendix \ref{app:motivational}.
 
 %\subsection{The Results}

To validate the impact of our approach, we performed numerical experiments on a collection of well-known instances \cite{Matpower}.
The experiments have been performed on a computer with an Intel Xeon CPU E5-2620 clocked at 2.40GHz and 128 GB of RAM.
We have used randomisation in generating the initial point, as well as in the sampling of coordinates, but 
we have used a fixed random seed for all runs of all methods. 
 %the same initial point as in Matpower \cite{Matpower}: 
 %voltage magnitudes set uniformly to 1,
 %phase angles set uniformly to 0,
 %and both active and reactive powers generated set to the mid-point of the 
 %feasible intervals at the respective generators.
Throughout, we compare the performance of the 
 coordinate descent of \cite{MarecekTakac2015} 
on the Lavaei-Low SDP relaxation \cite{lavaei2012zero} (plotted in blue),
against the Newton method on the non-convex Lagrangian (plotted in yellow),
against the performance of a variant of the hybrid method (plotted in red), 
 which switches from the coordinate descent on the convexification to 
 to Newton method on the non-convex Lagrangian, when the $\alpha$-$\beta$ test
is satisfied.
In Figures~\ref{fig:case300}--\ref{fig:case2383wp},
 we present a sample of the results.

For the first illustration, we chose the IEEE 300-bus test system, 
  and plotted both a measure of infeasibility (left; see \cite{MarecekTakac2015} for a definition) and 
  the objective function value (middle)
  against both wall-clock time (top row) and epochs (bottom row) in Figure~\ref{fig:case300}.
We use the the term epoch to mean $m$
  iterations of coordinate descent,
  or $m$ coordinate-wise Newton steps,
  for an instance in dimension $m$.
As can be seen by comparing the top and bottom row, 
  the wall-clock time corresponding to one epoch across the three methods is similar.
On the other hand, the convergence rates are visibly different, with the infeasibility decreasing at a quadratic rate
for the Newton and hybrid method. 

Further, we present the results on three more IEEE test systems in Figure \ref{fig:additional}
 in a more concise form with only the evolution of infeasibility (top row) and objective function (bottom row)
 over the number of epochs. 
The 30-bus (on the left) and 118-bus (on the right) test systems illustrate the typical performance:
the evolution of infeasibility of the hybrid method overlaps with the first-order method
  until the switch-over. Henceforth, the quadratic rate of convergence
  resembles that of the Newton method, except with a better starting point. 
The 57-bus test system (in the middle) demonstrates the importance of the starting point: 
  our implementation of the Newton method from the Matpower starting point does not converge. 

Next, to illustrate the scalability of the approach, we present the results on a snapshot of the Polish system
 in Figure \ref{fig:case2383wp}.
There are 2383 buses in the snapshot, and more importantly, 
  tap-changing and phase-shifting transformers, 
  double-circuit transmission lines, and multiple generators at each bus,
  which complicate the formulation of the thermal limits,
  as explained in Section 5.2 of \cite{MarecekTakac2015}. 
Despite the preliminary nature of our implementation, compared to the established codes, developed over a decade or more  \cite{Matpower},
 the convergence seems very robust. 
%to infeasibility $10^{-15}$ 

Finally, in the right-most plots of Figures~\ref{fig:case300} and \ref{fig:case2383wp},
  we plot the ratio of the cardinality of the active set to the number of inequalities 
  over the epochs or time. 
This provides an empirical justification for the choice of Algorithm \ref{alg:HybridActiveset}:
  the active set clearly stabilises much earlier than the objective function value,
  and is only a small fraction of the count of the ineqalities,
  which allows for the short run-time of the test implementing Proposition \ref{mubound}.

\section{Related Work}

Let us present a brief overview of the rich history of the study of the convergence of Newton method.
The best known result in the field is the theorem of Kantorovich \cite{kantorovich1948},
which formalises the assumptions under which whenever for a closed ball of radius $t_*$ centered at $x_0$, it holds that
$\nabla F(x) + \nabla F(x)^T \succ 0$ for all $x$ in the ball, 
the ball is a domain of monotonicity for the function $F$.
(See also Appendix \ref{sec:Kantorovich}.)
Traditionally, it has been assumed that testing the property across the 
closed ball is difficult. 

Recently, Henrion and Korda \cite{6606873} have shown that the domain of monotonicity of a polynomial system can be 
computed by solving an infinite-dimensional linear program over the space of 
measures, whose value can be approximated by a certain hierarchy of convex semidefinite optimisation problems.
See also the work of Valm{\' o}rbida et al.\ \cite{5399969,6859263,7268848} % and others . % 
in the context of partial differential equations, and elsewhere \cite{7106722}.
Dvijotham et al. \cite{Dj15,Dj15b} showed that it can also be be cast as a 
certain non-convex semidefinite optimisation problem.
Notice, however, that this line of work \cite{Dj15,Dj15b,6606873}
does not consider inequalities and may be rather computationally challenging.
Similarly, the $\alpha$-$\beta$ theory \cite{ShubSmale1993,chen1994approximate,Cucker1999},
does not consider inequalities.

To summarise: traditionally, the convergence of Newton method could be 
guaranteed only by the non-constructive arguments of the theorem of Kantorovich.
Alternatively, one could the recently developed approaches \cite{6606873,Dj15,Dj15b}, 
albeit at a computational cost possibly higher than that of solving ACOPF.
%not least in terms of the computational cost of solving the problems involved.
%, as solving one instance of (\ref{EqDjA}--\ref{EqDjF}) is NP-Hard, just as solving the original ACOPF is NP-Hard.
Our approach seems to improve upon these considerably.

%\todo[inline]{Is it good if we include a short paragraph about parallelism of the methodology? I remember that in the email, Lavaei suggested about possibility of distributed fashion to accommodate the topic of the special issue.}

\section{Conclusions}

Without the use of (hierarchies of) convex relaxations, Newton-type methods can converge to particularly bad local optima of non-convex problems. 
Even the fastest first-order methods for computing strong convex relaxations are, however, rather slow on their own.
Hybrid methods combining first-order methods for the strong convex relaxations and Newton-type methods for the non-convex problems 
combine the guarantees of convergence associated with (hierarchies of) convex relaxations 
and the quadratic rates of convergence of Newton method.
Crucially, such hybrid methods can be implemented in a distributed fashion, as discussed in \cite{MarecekTakac2015}.
This improves upon \cite{Dj15,Dj15b} and opens up many novel directions
for future research.

\newpage
\bibliographystyle{abbrv} 
\bibliography{acopf,literature,jie}

%newpage
\appendix
\section{Kantorovich's theorem}
\label{sec:Kantorovich}

Let $X$ be a Banach space, $X^*$ its dual, $F$ a functional on $X$, 
and $\nabla F$ a Fr{\' e}chet derivative of $F$. 
Further, define the open and closed balls at $x\in X$ as:
\begin{subequations}
\begin{align}
B(x,r) & \coloneq  \{ y\in X ;\; \|x-y\|<r \} \textrm{ and } \\
B[x,r] & \coloneq  \{ y\in X ;\; \|x-y\|\leqslant r\},
\end{align}
\end{subequations}
respectively. 
Kantorovich's theorem can be stated as follows:

\begin{theorem}[\cite{kantorovich1948}] \label{th:knclass}
  Let $X$, $\banachb$ be Banach spaces, $C\subseteq X$
  and $F:{C}\mapsto \banachb$ a continuous function, continuously
  differentiable on $\mathrm{int}(C)$. Take $x_0\in \mathrm{int}(C)$,
  $L,\, b>0$ and suppose that
  \begin{description}
  \item [1)] $\nabla F(x_0)$ is non-singular,
  \item  [2)]
    \( \|
    \nabla F(x_0)^{-1}\left[ \nabla F(y)- \nabla F(x)\right]
    \| \leq L\|x-y\|
    \)\;\;  for any $x,y\in C$,
  \item [3)] $ \| \nabla F(x_0)^{-1}F(x_0)\|\leq b$,
  \item [4)] $2bL\leq 1$.
  \end{description}
  If $B[x_0,t_*]\subset C$, then the sequences $\{x_k\}$ generated by Newton method for
  solving $F(x)=0$ with starting point $x_0$,
  \begin{equation} \label{ns.KT} 
    x_{k+1} \coloneq {x_k}- [\nabla F(x_n)]^{-1}F(x_k), \qquad k=0,1,\cdots, 
  \end{equation}
  is well defined, is contained in $B(x_0, t_*)$, converges to a
  point $x_*\in B[x_0,t_*]$, which is the unique zero of $F$ in
  $B[x_0,t_*]$, and 
  \begin{equation}
    \label{eq:q.conv.x}
    \|x_*-x_{k+1}\|\leq \frac{1}{2} \|x_*-x_k \|, \qquad
    k=0,1,\,\cdots. 
  \end{equation}
   Moreover, if $2bL<1$, then
   \begin{align} \label{eq:q2}
     \|x_*-x_{k+1}\| & \leq\frac{1-\theta^{2^k}}{1+\theta^{2^k}}
     \frac{ L}{2\sqrt{1-2bL}}\|x_*-x_k\|^2 \\ & \leq \nonumber
     \frac{ L}{2\sqrt{1-2bL}}\|x_*-x_k\|^2, \quad k=0,1,\cdots, 
   \end{align}
  where $\theta \coloneq  t_*/t_{**}<1$, and $x_*$ is the
  unique zero of $F$ in $B[x_0,\rho]$ for any $\rho$ such that
  \begin{equation}
 t_*\leq\rho<t_{**},\qquad B[x_0,\rho]\subset C,
  \end{equation}
where
  \begin{equation}
    \label{eq:scroots}
    t_* \coloneq  \frac{1-\sqrt{1-2bL}}{L},\qquad
    t_{**} \coloneq \frac{1+\sqrt{1-2bL}}{L}. 
  \end{equation}
\end{theorem}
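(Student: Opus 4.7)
The plan is to use the classical majorant technique of Kantorovich, in which a scalar quadratic $\varphi$ whose Newton iterates are tractable is shown to dominate the Banach-space Newton iterates $\{x_k\}$ term by term. Concretely, I would set $\varphi(t) \coloneq (L/2)t^2 - t + b$, whose derivatives satisfy $\varphi'(t) = Lt - 1$ and $\varphi''(t) = L$, and whose roots under hypothesis 4 are exactly $t_*$ and $t_{**}$ from \eqref{eq:scroots}. Running scalar Newton from $s_0 \coloneq 0$, i.e. $s_{k+1} \coloneq s_k - \varphi(s_k)/\varphi'(s_k)$, one checks by elementary calculus that $\{s_k\}$ is strictly increasing, lies in $[0,t_*)$, and converges to $t_*$; moreover, since $\varphi$ is a quadratic, scalar Newton has a closed form from which $t_* - s_k$ can be written explicitly in terms of $\theta = t_*/t_{**}$, and this is what eventually yields the precise factor $(1-\theta^{2^k})/(1+\theta^{2^k})$ in \eqref{eq:q2}.

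Next, I would establish by simultaneous induction on $k \geq 0$ the three statements (i) $x_k \in B[x_0,s_k] \subset B[x_0,t_*] \subset C$, (ii) $\nabla F(x_k)$ is invertible with $\|\nabla F(x_k)^{-1}\nabla F(x_0)\| \leq -1/\varphi'(s_k)$, and (iii) $\|\nabla F(x_0)^{-1}F(x_k)\| \leq \varphi(s_k)$. The base case is hypothesis 1 and 3. For the step, invertibility of $\nabla F(x_{k+1})$ comes from the Banach perturbation lemma applied to $\nabla F(x_0)^{-1}[\nabla F(x_{k+1}) - \nabla F(x_0)]$, whose norm is at most $L\|x_{k+1}-x_0\| \leq L s_{k+1} < Lt_* = 1-\sqrt{1-2bL} \leq 1$ using hypothesis 2; the quantitative bound in (ii) then follows from the same lemma. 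For (iii), I would use the integral remainder
\begin{equation}
F(x_{k+1}) = \int_0^1 [\nabla F(x_k + \tau \Delta_k) - \nabla F(x_k)]\,\Delta_k\,d\tau,\qquad \Delta_k \coloneq x_{k+1}-x_k,
\end{equation}
(valid because the Newton step kills the linear term) together with hypothesis 2 to obtain $\|\nabla F(x_0)^{-1} F(x_{k+1})\| \leq (L/2)\|\Delta_k\|^2 \leq (L/2)(s_{k+1}-s_k)^2 = \varphi(s_{k+1})$, where the last equality is the identity satisfied by scalar Newton on a quadratic. The majorization $\|\Delta_k\| \leq s_{k+1}-s_k$ that justifies (i) comes from combining (ii) and (iii) at step $k$: $\|\Delta_k\| = \|\nabla F(x_k)^{-1} F(x_k)\| \leq (-1/\varphi'(s_k))\varphi(s_k) = s_{k+1}-s_k$.

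With the majorization in hand, $\{x_k\}$ is Cauchy because $\{s_k\}$ is, so it converges to some $x_*\in B[x_0,t_*]$; continuity of $F$ together with $\|\nabla F(x_0)^{-1} F(x_k)\| \leq \varphi(s_k) \to 0$ forces $F(x_*) = 0$. Passing to the limit $j\to\infty$ in $\|x_j - x_k\| \leq s_j - s_k$ yields $\|x_* - x_k\| \leq t_* - s_k$, which combined with the closed-form analysis of $\{s_k\}$ gives both the linear estimate \eqref{eq:q.conv.x} (in the degenerate case $2bL=1$) and the quadratic estimate \eqref{eq:q2} (in the strict case $2bL<1$). For uniqueness in $B[x_0,\rho]$, I would suppose a second zero $y_*$ exists, write $0 = F(y_*) - F(x_*) = \bigl(\int_0^1 \nabla F(x_*+\tau(y_*-x_*))\,d\tau\bigr)(y_*-x_*)$, and show that the bracketed operator composed with $\nabla F(x_0)^{-1}$ differs from the identity by at most $L\rho/2 + L t_*/2 < 1$ when $\rho < t_{**}$, using hypothesis 2; the Banach lemma then gives invertibility and hence $y_* = x_*$.

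The main obstacle is the joint induction in the second paragraph: the three statements must be propagated in a single round, because the bound on $\|\nabla F(x_{k+1})^{-1}\nabla F(x_0)\|$ needed to continue Newton depends on $s_{k+1}$, while the bound on $\varphi(s_{k+1})$ needed to control the residual depends in turn on the step length $\|\Delta_k\|$ being majorized by $s_{k+1}-s_k$. Finding this coupled inductive hypothesis is the substantive content of Kantorovich's argument; once it is set up, each individual inequality is a short calculation using Lipschitz continuity of $\nabla F$ and the algebraic identities satisfied by Newton on a quadratic.
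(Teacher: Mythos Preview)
The paper does not prove this theorem: it is stated in the Appendix as a classical result with a citation to Kantorovich (1948) and no proof is supplied. Your proposal is the standard Kantorovich majorant argument and is correct; the coupled induction you set up, the scalar majorant $\varphi(t)=(L/2)t^{2}-t+b$, and the uniqueness argument via the Banach perturbation lemma are exactly the ingredients of the classical proof, so there is nothing to compare against in the paper itself.
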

\noindent

\end{document}